\newcommand\eps{\varepsilon}
\newcommand\R{{\mathbf{R}}}
\newcommand\Z{{\mathbf{Z}}}
\newcommand\N{{\mathcal{N}}}
\newcommand\T{{\mathbf{T}}}
\newcommand\supp{{\operatorname{Supp}}}
\theoremstyle{plain}
  \newtheorem{theorem}[subsection]{Theorem}
  \newtheorem{proposition}[subsection]{Proposition}
  \newtheorem{lemma}[subsection]{Lemma}
  \newtheorem{corollary}[subsection]{Corollary}
\theoremstyle{remark}
  \newtheorem{remark}[subsection]{Remark}
\theoremstyle{definition}
\begin{document}
\title[]{On localization of the Schr\"odinger maximal operator}
\author{Shuanglin Shao}
\address{IMA, University of Minnesota, Minneapolis, MN 55455}
\email{slshao@ima.umn.edu}

\vspace{-1in}
\begin{abstract}
In \cite{Lee:2006:schrod-converg}, when the spatial variable $x$ is localized, Lee observed that the Schr\"odinger maximal operator $e^{it\Delta}f(x)$ enjoys certain localization property in $t$ for frequency localized functions. In this note, we give an alternative proof of this observation by using the method of stationary phase, and then include two applications: the first is on is on the equivalence of the local and the global Schr\"odinger maximal inequalities; secondly the local Schr\"odinger maximal inequality holds for $f\in H^{3/8+}$, which implies that $e^{it\Delta}f$ converges to $f$ almost everywhere if $f\in H^{3/8+}$. These results are not new. In this note we would like to explore them from a slightly different perspective, where the analysis of the stationary phase plays an important role.
\end{abstract}
\date{\today}
\maketitle

\section{Introduction}
In \cite{Carleson:1980:some-analytic-problems}, Carleson raised a question: for what $f$, $e^{it\Delta}f$ converges to $f$ almost everywhere as $t$ goes to zero? where $e^{it\Delta}f(x):=\int_{\R^d} e^{ix\xi+it|\xi|^2} \hat{f}(\xi)d\xi.$
He showed that the answer is true if $f\in H^{1/4}(\R)$, where $H^s(\R^d)$ denotes the usual inhomogeneous Sobolev spaces. This condition on $f$ was found to be necessary by Dahlberg and Kenig, \cite{Dahlberg-Kenig:1982:converg-schrodinger}. It is not hard to generalize the counterexample obtained by Dahlberg, Kenig in \cite{Dahlberg-Kenig:1982:converg-schrodinger} to high dimensions; that is to say, the answer is no if $f\in H^{s}(\R^d)$ if $s<1/4$. When $d\ge 2$, Sj\"olin \cite{Sjolin:1987:converg-schrod}, and Vega \cite{Vega:1988:converg-schrod} have shown that $s>1/2$ is a sufficient condition independently. However the full question remains open when $d\ge 2$, though some exciting partial progress has been made when $d=2$, which is the dimension we will study in this paper.

As is well known, the study of the pointwise convergence problem is related to the boundedness of Schr\"odinger maximal operator: it is equivalent to asking, what is the least $s_0>0$ such that
\begin{equation}\label{eq-1}
\|\sup_{0\le t\le 1} |e^{it\Delta} f|\|_{L_x^2(B(x_0, 1))} \le C \|f\|_{H^{s_0+}},
\end{equation}where $B(x_0, 1)$ is a ball in $\R^2$, which we will take to be the unit ball $B(0,1)$ by translation invariance, and $s_0+$ denotes that $s_0+\eps$ for any $\eps>0$ and the constant $C$ depends on $\eps$. Thus $s_0=1/4$ is expected for all dimensions by the discussion above.

When $d=2$, Bourgain \cite{Bourgain:1992:remark-schrod, Bourgain:1995:new-oscillatory-integr} was the first to make some progress to show that there exists some $s<1/2$ such that \eqref{eq-1} holds; it was based on two improvements on Tomas-Stein's inequality for the paraboloids $e^{it\Delta}f:\, L^2\to L^4_{t,x}$: some new linear restriction estimates beyond Tomas-Stein's were found, and its $X_{p}$-space refinement. This framework and idea were refined and developed further in later works by Moyua, Vargas and Vega \cite{Moyua-Varg-Vega:1996:schrod-maxi, Moyua-Vargas-Vega:1999}, who showed that \eqref{eq-1} holds if $s>\kappa$ for some  $\kappa$ satisfying $20/41<\kappa <41/84$. The exponent was further improved by Tao and Vargas \cite{Tao-Vargas:2000:cone-2} to $s>15/32$, and by Tao \cite{Tao:2003:paraboloid-restri} to $s>2/5$, where the gain was a corollary of some new bilinear restriction estimates for paraboloids. These new estimates in \cite{Tao:2003:paraboloid-restri} were obtained by using the method of wave packets decomposition and induction on scales first introduced by Wolff \cite{Wolff:2001:restric-cone} to prove the sharp bilinear restriction estimates for the cones; it was adapted by Tao \cite{Tao:2003:paraboloid-restri} to establish the sharp bilinear estimates for the paraboloids. This framework designed by Tao in \cite{Tao:2003:paraboloid-restri} seems to be robust, which was used by Lee \cite{Lee:2006:schrod-converg} to improve further to $s>3/8$. It is interesting that Lee's result in \cite{Lee:2006:schrod-converg} is not coming from any new linear or bilinear restriction estimates; instead Lee made a crucial observation that the Schr\"dinger maximal operator enjoys some localization in time for frequency localized functions if the space is localized (cf. Theorem \ref{thm-localization}.).

We have found Lee's localization of the Schr\"odinger maximal operator for frequency localized functions interesting. Let us first recall the statement.
\begin{theorem}\label{thm-localization}
Suppose $f\in L^2$ satisfying that $\hat{f}$ is supported by $A(1):=\{1/2\le |\xi|\le 1\}$. Then the following implication holds,
If \begin{equation}\label{eq-3}
\|\sup_{0\le t\le N} |e^{it\Delta} f|\|_{L_x^2(B(0,N))} \lesssim N^{\alpha}\|f\|_2,
\end{equation}
then
\begin{equation}\label{eq-4a}
\|\sup_{0\le t\le N^2} |e^{it\Delta} f|\|_{L_x^2(B(0,N))} \lesssim  N^{\alpha}\|f\|_2,
\end{equation}
and
\begin{equation}\label{eq-4b}
\|\sup_{0\le t\le N} |e^{it\Delta} f|\|_{L_x^2(B(0, \lambda N))} \lesssim N^{\alpha}\|f\|_2
\end{equation} for any $\lambda \gg 1$.
\end{theorem}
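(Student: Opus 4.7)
Represent the evolution as a convolution. Take $\eta\in C^\infty_c(\R^2)$ equal to $1$ on $\{1/2\le|\xi|\le 1\}$ and supported in a slightly larger annulus, so that $\hat f=\eta\hat f$, and set
\[
K_t(z) = \int \eta(\xi)\, e^{iz\cdot\xi+it|\xi|^2}\, d\xi, \qquad e^{it\Delta}f = K_t*f.
\]
The phase $z\cdot\xi+t|\xi|^2$ is stationary in $\xi$ at $\xi^\ast=-z/(2t)$, which lies in $\mathrm{supp}\,\eta$ exactly when $|z|\sim 2t$. Off this regime, $|z+2t\xi|\gtrsim\max(|z|,t)$ on $\mathrm{supp}\,\eta$, and iterated integration by parts in $\xi$ yields the rapid decay
\[
|K_t(z)|\lesssim_M (\max(|z|,t))^{-M}, \qquad |z|/t\notin[1/8,8].
\]
Thus $K_t$ is essentially supported on $\{|z|\sim 2t\}$.

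For \eqref{eq-4b}: the hypothesis is translation-invariant---$f(\cdot - z)$ has Fourier transform $e^{-iz\cdot\xi}\hat f(\xi)$ which is still supported in the annulus, and its $L^2$ norm equals $\|f\|_2$---so $\|\sup_{t\in[0,N]}|e^{it\Delta}f|\|_{L^2(B(z,N))}\lesssim N^\alpha\|f\|_2$ for every $z$. Cover $B(0,\lambda N)$ by a bounded-overlap family $\{B(z_j,N)\}_j$. For each $j$ let $\chi_j$ be a smooth cutoff of $B(z_j,CN)$ with $C$ large; the kernel decay yields $|K_t(x-y)|\lesssim_M N^{-M}$ whenever $x\in B(z_j,N)$, $t\in[0,N]$, and $y\notin B(z_j,CN)$, so
\[
e^{it\Delta}f(x) = e^{it\Delta}(\chi_j f)(x) + O_M(N^{-M}\|f\|_2)
\]
on $B(z_j,N)\times[0,N]$. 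Applying the translated hypothesis to $\chi_j f$ (whose $O(1/N)$ frequency spread from the cutoff is absorbed by having chosen $\eta$ slightly larger at the outset) and summing in $L^2$ over $j$, using $\sum_j\|\chi_j f\|_2^2\lesssim\|f\|_2^2$ by bounded overlap of $\{B(z_j,CN)\}$, gives \eqref{eq-4b}; the apparent $\lambda$ loss from the number of balls is absorbed by the $L^2$ summation.

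For \eqref{eq-4a}: split $[0,N^2]=[0,N]\cup\bigcup_{k\ge 0}I_k$ with $I_k=[2^kN,2^{k+1}N]$; the first piece is the hypothesis. For $I_k$, kernel concentration forces the relevant part of $f$ (at $x\in B(0,N)$) to lie in the spatial annulus $A_k:=\{|y|\sim 2^{k+1}N\}$. Letting $f_k$ be a smooth localization of $f$ to $A_k$, one obtains $\sup_{t\in I_k}|e^{it\Delta}f|\approx\sup_{t\in I_k}|e^{it\Delta}f_k|$ on $B(0,N)$ modulo $O_M(N^{-M}\|f\|_2)$. A direct invocation of the hypothesis at scale $2^kN$ for $e^{it\Delta}f_k$ would pay an unsummable factor $(2^k)^\alpha$; to remove it, further decompose $f_k$ into angular frequency sectors $f_k^\omega$ and, for each $\omega$, apply a Galilean boost with velocity $-2\omega$ together with a time translation by $2^kN$. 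These exact symmetries of the Schr\"odinger equation transform the problem into one with frequency localized near $0$, spatial evaluation on a ball of radius $N$, and time interval of length $N$, so the hypothesis applies directly. Orthogonality of the angular pieces together with finite overlap of $\{A_k\}$ (so that $\sum_k\|f_k\|_2^2\lesssim\|f\|_2^2$) lets us sum to obtain \eqref{eq-4a}. This scale-matching is the main obstacle: the stationary-phase analysis of $K_t$ supplies the spatial truncation, but the Galilean/angular reduction is what converts the long-time small-ball estimate back to the unit-scale hypothesis without losing $(2^k)^\alpha$.
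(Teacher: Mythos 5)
Your argument for \eqref{eq-4b} is fine and is in the same spirit as the paper's (translate the hypothesis, truncate $f$ spatially using the decay of $K_t$ off $\{|z|\sim 2t\}$, sum in $L^2$ with bounded overlap); the mild enlargement of the Fourier support of $\chi_j f$ by $O(N^{-1})$ is a shared, harmless issue. The gap is in \eqref{eq-4a}, which is the heart of the theorem. Your block $I_k=[2^kN,2^{k+1}N]$ has length $2^kN$, and no exact symmetry of the Schr\"odinger evolution converts the problem on $I_k$ into one on a time interval of length $N$ over a ball of radius $N$. Time translation by $2^kN$ leaves an interval of length $2^kN$; a Galilean boost with velocity $-2\omega$ replaces $|e^{it\Delta}f_k^\omega(x)|$ by $|e^{it\Delta}h(x+2t\omega)|$, so the evaluation set $B(0,N)\times I_k$ becomes a tube whose spatial extent is $\sim 2^kN$, not a ball of radius $N$; and after the boost the frequency support of $h$ sits near the origin, where the hypothesis \eqref{eq-3} (which requires support in $A(1)$) simply does not apply, so it cannot be ``applied directly.'' Parabolic rescaling would shorten the time interval, but it moves the frequency annulus and enlarges the ball, reintroducing exactly the $(2^k)^{\alpha}$-type loss you are trying to avoid.

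What is missing is the mechanism that lets one chop $[0,N^2]$ (or each of your blocks) into $\sim N$ intervals of length $N$ and sum without losing the number of intervals: applying \eqref{eq-3} to $e^{it_j\Delta}f$ for each translated interval costs a factor $N^{1/2}$ in $L^2$. The paper's Lemma \ref{le-lee-local} supplies the fix: for $|x|\le N$, $|t-t_j|\le N$, stationary phase lets one replace $e^{it_j\Delta}f$ by a smooth spatial truncation $f_j$ to a ball of radius $\sim N$ (this is \eqref{eq-37}), and the crucial point is the almost orthogonality $\sum_j\|f_j\|_2^2\lesssim\|f\|_2^2$ in \eqref{eq-38}: after decomposing $\hat f$ into caps of width $N^{-1}$ about $\xi_0$, the part of $e^{it_j\Delta}f$ living on $B(0,CN)$ comes from $f$ near $y\approx 2t_j\xi_0$, and these regions are $\gtrsim N$-separated for distinct $j$, so the pieces $f_j$ are essentially orthogonal in $j$. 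Your orthogonality inputs (angular sectors, finite overlap of the annuli $A_k$) act across $k$ and $\omega$ but say nothing about the summation over time sub-intervals within a single block, which is precisely where the loss occurs; without an analogue of \eqref{eq-38} the argument does not close.
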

It is essentially same as \cite[Lemma 2.3]{Lee:2006:schrod-converg}, which Lee established by using the wave packets decomposition of Schr\"odinger waves; for the definition of wave packets, see the statement in Lemma \ref{le-wave-packet}. In this note, we present an alternative argument by the stationary phase analysis in Section \ref{sec:localization}.

By Theorem \ref{thm-localization} and wave packets decomposition, Rogers  \cite{Rogers:2008:A-local-smoothing-estimate-for-Schrodinger} obtained the following equivalence between the ``local" bound \eqref{eq-local-c} and  the ``global" bound \eqref{eq-global-c} on the Schr\"odinger maximal functions. In a similar spirit we will use the analysis of stationary phase to give another proof in Section \ref{sec:equiv}.
\begin{theorem}\label{thm-equiv} Let $s>0$. Then the local bound
\begin{equation}\label{eq-local-c}
\left\|\sup_{0\le t\le 1} |e^{it\Delta}f|\right\|_{L^2(B(0,1))} \le C \|f\|_{H^{s+}(\R^2)},
\end{equation}is equivalent to the global bound,
\begin{equation}\label{eq-global-c}
\left\|\sup_{0\le t\le 1} |e^{it\Delta}f|\right\|_{L^2(\R^2)} \le C \|f\|_{H^{2s+}(\R^2)}.
\end{equation}
\end{theorem}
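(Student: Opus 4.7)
Both directions of the equivalence are non-trivial, since the two bounds require different Sobolev regularity and neither is a matter of simply restricting or extending the domain of integration. The plan is to treat them with a single template: Littlewood--Paley decompose $f$ into pieces $f_\nu$ with $\supp \hat f_\nu \subset A(\nu)$, parabolically rescale each piece to the unit-frequency function $g_\nu(y) = f_\nu(y/\nu)$, reinterpret either hypothesis as a scale-indexed bound for unit-frequency data, and let Theorem \ref{thm-localization} bridge the gap between the local and global formulations.

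Parabolic scaling $g(y) = h(y/\mu)$, applied to $h$ with $\supp \hat h \subset A(\mu)$, converts (\ref{eq-local-c}) into the equivalent family
\[
\left\|\sup_{0\le \tau\le \mu^2}|e^{i\tau\Delta}g|\right\|_{L^2(B(0,\mu))} \lesssim \mu^{s+}\|g\|_{L^2} \qquad (\supp \hat g \subset A(1),\ \mu \ge 1),
\]
and converts (\ref{eq-global-c}) into the same family with $B(0,\mu)$ replaced by $\R^2$ and $\mu^{s+}$ replaced by $\mu^{2s+}$. For (\ref{eq-local-c}) $\Rightarrow$ (\ref{eq-global-c}): fix a dyadic $\nu$ and apply the rescaled local bound at auxiliary scale $\mu = \nu^2$, then restrict the time supremum to $[0,\nu^2]$; the outcome is precisely the hypothesis of Theorem \ref{thm-localization} at scale $\nu^2$ with $\alpha = s+$, so its conclusion (\ref{eq-4b}) gives the same bound on $B(0,\lambda\nu^2)$ uniformly in $\lambda \gg 1$, and $\lambda \to \infty$ delivers the $L^2(\R^2)$ estimate. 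Inverting the rescaling yields $\|\sup_{0\le t\le 1}|e^{it\Delta}f_\nu|\|_{L^2(\R^2)} \lesssim \|f_\nu\|_{H^{2s+}}$, and a dyadic Cauchy--Schwarz summation (using the $\varepsilon$ slack) produces (\ref{eq-global-c}).

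The reverse implication is the mirror image: take the rescaled global bound at $\mu = \nu^{1/2}$ and restrict to $B(0,\nu)$ to obtain the hypothesis of Theorem \ref{thm-localization} at scale $\nu$ with $\alpha = s+$; conclusion (\ref{eq-4a}) then doubles the time interval to $[0,\nu^2]$ on $B(0,\nu)$ at no extra cost, which is the rescaled form of the local bound at scale $\nu$. Inverting the rescaling and summing dyadically in $\nu$ gives (\ref{eq-local-c}).

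The substantive step in both directions is the choice of the auxiliary scale ($\mu = \nu^2$ one way, $\mu = \nu^{1/2}$ the other), selected so that the parabolically rescaled hypothesis matches, with $\alpha = s+$, that of Theorem \ref{thm-localization} at a scale where the appropriate conclusion closes the gap. The remainder is routine bookkeeping: the low-frequency piece $f_{\le 1}$ by a direct energy estimate, the translation invariance of the local bound that moves $B(0,1)$ to any unit ball, the uniformity in $\lambda$ of the constant in (\ref{eq-4b}), and the summability of the $\varepsilon$ losses across the Littlewood--Paley decomposition.
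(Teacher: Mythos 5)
Your proposal is correct, and in the direction \eqref{eq-global-c} $\Rightarrow$ \eqref{eq-local-c} it coincides with the paper's argument: the same auxiliary-scale choice $\mu=\nu^{1/2}$ turns the rescaled global bound into the hypothesis \eqref{eq-3} at scale $\nu$ with $\alpha=s+$, and \eqref{eq-4a} extends the time interval to $[0,\nu^2]$ at no cost. Where you genuinely diverge is the harder direction \eqref{eq-local-c} $\Rightarrow$ \eqref{eq-global-c}. The paper does \emph{not} invoke \eqref{eq-4b} there; instead it proves the spatial globalization by hand in Proposition \ref{prop-equiv}: it partitions $\R^2$ into balls of radius $N^2$, uses the stationary-phase decay \eqref{eq-34} of the kernel $K$ together with Schur's test to localize the data to enlarged balls, and then applies the local hypothesis at scale $N^2$ ball by ball — this direct argument is the stated point of Section \ref{sec:equiv}. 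You instead apply the rescaled local bound at the auxiliary scale $\mu=\nu^2$, feed it into Theorem \ref{thm-localization}, and let $\lambda\to\infty$ in \eqref{eq-4b} (monotone convergence) to replace $B(0,\lambda\nu^2)$ by $\R^2$; this is essentially Rogers' original derivation from Lee's localization lemma, used here as a black box. Both routes rest on the same idea of applying the hypothesis at scale $N^2$; yours is more modular and avoids re-running the kernel/Schur-test estimates, but it leans on reading \eqref{eq-4b} as holding with a constant uniform in $\lambda$ (which is the intended meaning — no $\lambda$ appears on the right-hand side, and the proof localizes $f$ to $\lesssim\lambda^2$ spatial pieces $f_k$ with $\sum_k\|f_k\|_2^2\lesssim\|f\|_2^2$ — but you are right to flag it, since the whole argument collapses without that uniformity), whereas the paper's version is self-contained at the level of Section \ref{sec:equiv} and only ever needs \eqref{eq-4a}. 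Your remaining bookkeeping (Littlewood--Paley summation via the $\varepsilon$-slack, the low-frequency piece, the inversion of the parabolic scaling with the factor $\nu^{-1}$ in $L^2(\R^2)$ matching $\|f_\nu\|_2\sim\nu^{-1}\|g_\nu\|_2$) is routine and matches the paper's implicit reductions.
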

See \cite{Rogers:2008:A-local-smoothing-estimate-for-Schrodinger} for references on progress made on \eqref{eq-global-c}.

The second application of Theorem \ref{thm-localization} is to obtain the following local bound of the Schr\"odinger maximal operator \eqref{eq-1}, Theorem \ref{thm-loc-bound}, which is contained in \cite{Lee:2006:schrod-converg}. For completeness, we choose to follow the analysis in \cite{Tao:2003:paraboloid-restri} to present the argument. The new point in our argument is that we can take advantage of the localization in time to localize the spatial support of $f$ to a ball $B(0, 4N)$ and then apply the Bernstein inequality directly. Even though the time localization property is lost when we apply the Bernstein inequality to deal with the supermum in time, the spatial localization of $f$ enables us to gain it back. Thus we focus on establishing some bilinear estimate $L^1_xL^2_t(|x|\le N, t\sim N)$ instead of $L^1_xL^\infty_t(|x|\le N, t\sim N)$.
\begin{theorem}\label{thm-loc-bound}
\begin{equation}\label{eq-44}
\left\|\sup_{0\le t\le 1} |e^{it\Delta}f|\right\|_{L^2(B(0,1))} \le C \|f\|_{H^{s_0+}(\R^2)}, \text{ for }s_0=3/8.
\end{equation}
\end{theorem}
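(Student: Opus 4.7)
The plan is to adapt the bilinear restriction framework of Tao \cite{Tao:2003:paraboloid-restri}, using Theorem \ref{thm-localization} to work in a shorter time window that permits an $L^1_x L^2_t$ bilinear estimate in place of the $L^1_x L^\infty_t$ estimate used in earlier arguments. By a Littlewood--Paley decomposition in frequency and the parabolic rescaling $g(x) := f(x/N)$, estimate \eqref{eq-44} (summed over dyadic $N$) reduces to proving, for $\hat g$ supported in $A(1)$,
\begin{equation*}
\|\sup_{0\le t\le N^2}|e^{it\Delta}g|\|_{L^2(B(0,N))} \lesssim_\eps N^{3/8+\eps}\|g\|_2,
\end{equation*}
which, by Theorem \ref{thm-localization}, is equivalent to the shorter-time estimate
\begin{equation*}
\|\sup_{0\le t\le N}|e^{it\Delta}g|\|_{L^2(B(0,N))} \lesssim_\eps N^{3/8+\eps}\|g\|_2. \tag{$\star$}
\end{equation*}

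Now the time window is $[0,N]$: wave packets for frequencies in $A(1)$ have group velocity $\lesssim 1$ and so travel distance $\lesssim N$, so the stationary phase analysis of Section \ref{sec:localization} lets us further assume $\supp g \subset B(0,4N)$. Thus $g$ is simultaneously localized in frequency and in space, and the Bernstein inequality applies directly to $g$. Moreover, for any bilinear piece $G(x,t) := e^{it\Delta}g_{\tau_1}(x)\cdot\overline{e^{it\Delta}g_{\tau_2}(x)}$ the time-Fourier support lies in $|\tau|\le 2$, and Bernstein in $t$ (after a harmless smooth cutoff on $[0,N]$) gives
\begin{equation*}
\sup_{0\le t\le N}|G(x,t)| \lesssim \|G(x,\cdot)\|_{L^2_t([-1,N+1])},
\end{equation*}
so that $\int_{B(0,N)}\sup_t|G|\,dx$ is controlled by $\|G\|_{L^1_x L^2_t(B(0,N)\times[-1,N+1])}$.

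With these two reductions in hand, I would decompose $\hat g = \sum_\tau \hat g_\tau$ into angular caps $\tau$ of size $\theta$ (to be optimized), use the square expansion to bound $|e^{it\Delta}g|^2$ by a diagonal sum $\sum_\tau |e^{it\Delta}g_\tau|^2$ (handled by a bootstrap on ($\star$) itself) plus a sum over separated pairs of caps $\tau_1,\tau_2$, and control the latter via the key bilinear estimate
\begin{equation*}
\|e^{it\Delta}g_{\tau_1}\cdot e^{it\Delta}g_{\tau_2}\|_{L^1_x L^2_t(B(0,N)\times[0,N])} \lesssim_\eps N^\eps\,\theta^\alpha\,\|g_{\tau_1}\|_2\|g_{\tau_2}\|_2,
\end{equation*}
obtained by interpolating Tao's sharp bilinear $L^{5/3+\eps}_{t,x}$ restriction theorem for the paraboloid against a Cauchy--Schwarz bound, and sharpened using the wave packet decomposition (Lemma \ref{le-wave-packet}) together with the spatial localization of $g$ from the previous step. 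A Whitney decomposition over the pair $(\tau_1,\tau_2)$ and an optimization in $\theta$ then close ($\star$).

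The main obstacle is establishing this $L^1_x L^2_t$ bilinear estimate with the correct power of $\theta$ and no hidden loss in $N$. The spatial localization $\supp g \subset B(0,4N)$ is precisely what allows the Bernstein-in-$t$ reduction to $L^2_t$ without losing a factor of $N^{1/2}$ from the enlargement of the time interval; executing the bookkeeping sharply enough to replace the $2/5$ exponent of \cite{Tao:2003:paraboloid-restri} with the $3/8$ of \cite{Lee:2006:schrod-converg} requires carefully tracking the geometry of wave packets within $B(0,4N)\times[0,N]$, along the lines of \cite[Section 4]{Lee:2006:schrod-converg}, but now entirely in the $L^1_xL^2_t$ frame.
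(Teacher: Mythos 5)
Your reductions in the first half are exactly the ones the paper makes: Littlewood--Paley plus parabolic rescaling to reduce to $\hat g$ supported in $A(1)$ on the time interval $[0,N^2]$, Theorem \ref{thm-localization} to shorten the time to $[0,N]$, stationary phase to localize $g$ to $B(0,4N)$, and Bernstein in $t$ (legitimate because the temporal Fourier support is of size $O(1)$) to replace the supremum by an $L^2_t$ norm of the bilinear expression, so that the whole problem becomes an $L^1_xL^2_t$ bilinear estimate on $B(0,N)\times\{t\sim N\}$. Up to this point the proposal is correct and matches the paper.

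The genuine gap is the key bilinear estimate itself, which you explicitly defer. Interpolating Tao's global bilinear $L^{5/3+}_{t,x}$ restriction theorem against Cauchy--Schwarz cannot give the required numerology: H\"older in $x$ over $B(0,N)$ from $L^{5/3}_x$ to $L^1_x$ costs $(N^2)^{2/5}=N^{4/5}$, and Bernstein in $t$ from $L^{5/3}_t$ to $L^2_t$ costs nothing, so this route reproduces the exponent $2s_0=4/5$, i.e.\ $s_0=2/5$, not the $2s_0=3/4$ needed for $s_0=3/8$. The paper does not obtain the mixed-norm estimate from any existing bilinear restriction theorem; it proves Proposition \ref{prop-bilinear}, namely $\|e^{it\Delta}f\,e^{it\Delta}g\|_{L^1_xL^2_t(|x|\le\lambda R,\,|t|\sim R)}\lesssim R^{3/4+}\|f\|_2\|g\|_2$ for transverse data, from scratch: the elongated region $|x|\le\lambda R$ is first cut into $\sim\lambda^2$ parallelepipeds and reduced to a single one using the localization machinery behind Theorem \ref{thm-localization} (in particular \eqref{eq-4b}), and then the estimate on one parallelepiped is proved by the wave packet decomposition (Lemma \ref{le-wave-packet}) together with an induction-on-scales statement (Proposition \ref{prop-induction-hypo}), which in turn requires redoing Tao's tube--ball relation $\sim$ and the $L^2$ spacetime estimate \eqref{eq-24} for tubes versus parallelepipeds of size $R^{1-\delta}$ in the $L^1_xL^2_t$ frame. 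Your phrase ``sharpened using the wave packet decomposition \ldots along the lines of Lee, Section 4'' points at exactly this argument but does not supply it, so the proposal stops at the main difficulty rather than resolving it. A secondary omission: your treatment of the near-diagonal caps by ``a bootstrap on $(\star)$ itself'' is not worked out; the paper instead disposes of the small-cap regime $2^{2j}N\le1$ directly via Proposition \ref{prop-tiny-support} (an $L^2_{t,x}\times L^2_xL^\infty_t$ Cauchy--Schwarz), which avoids any self-referential induction on the maximal estimate.
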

The proof of this theorem is in Section \ref{sec:bilinear}. As a corollary of the above theorems,
\begin{corollary}
\begin{enumerate}
\item
$\left\|\sup_{0\le t\le 1} |e^{it\Delta}f|\right\|_{L^2(\R^2)} \le C \|f\|_{H^{3/4+}(\R^2)}.$
\item If $f\in H^{3/8+}$, then $e^{it\Delta}f\to f$ almost everywhere as $t\to0$.
\end{enumerate}
\end{corollary}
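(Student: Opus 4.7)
Both items are immediate consequences of the results already assembled in the introduction.

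Part (1) is essentially one line: apply Theorem \ref{thm-equiv} with $s=3/8$ to the local bound furnished by Theorem \ref{thm-loc-bound}. This converts the estimate on $B(0,1)$ into the global $L^2(\R^2)$ estimate with $2s+ = 3/4+$ derivatives, which is exactly the claim.

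For part (2) I would run the classical passage from a maximal inequality to almost-everywhere convergence. First, pointwise convergence $e^{it\Delta}g(x)\to g(x)$ as $t\to 0$ holds on the dense subclass $\S(\R^2)\subset H^{3/8+}$: for $g\in\S(\R^2)$ the integrand in $\int e^{ix\cdot\xi}(e^{it|\xi|^2}-1)\hat{g}(\xi)\,d\xi$ is dominated by $|\hat{g}(\xi)|\in L^1(\R^2)$ and tends to $0$ pointwise in $\xi$, so dominated convergence applies. Next, given $f\in H^{3/8+}$ and $\eps>0$, I would choose $g\in\S(\R^2)$ with $\|f-g\|_{H^{3/8+}}<\eps$ and use the triangle inequality, together with the fact that $e^{it\Delta}g\to g$ pointwise, to obtain
\[
\limsup_{t\to 0}\,|e^{it\Delta}f(x)-f(x)| \le \sup_{0<t\le 1}|e^{it\Delta}(f-g)(x)| + |(f-g)(x)|.
\]
Finally, Chebyshev applied to each term, combined with the global maximal inequality just proved in part (1), bounds the measure of the $\lambda$-superlevel set of the left-hand side by $C\eps^2/\lambda^2$. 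Letting $\eps\to 0$ shows the set has measure zero for every $\lambda>0$, which yields almost-everywhere convergence on $\R^2$.

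Neither step presents any real obstacle; the argument is essentially bookkeeping around the two theorems already established. The only minor choice is whether to work locally---covering $\R^2$ by translates of $B(0,1)$ and using translation invariance of $e^{it\Delta}$ with Theorem \ref{thm-loc-bound}---or globally via part (1); I would prefer the global bound, as it handles the a.e. convergence argument cleanly on all of $\R^2$ at once.
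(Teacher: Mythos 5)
Part (1) is fine and is exactly the intended deduction: Theorem \ref{thm-equiv} with $s=3/8$ applied to Theorem \ref{thm-loc-bound} gives the global bound with $H^{3/4+}$ on the right.

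Part (2), however, has a genuine gap as you have set it up. Your density argument picks $g\in\S(\R^2)$ with $\|f-g\|_{H^{3/8+}}<\eps$, but then you invoke the \emph{global} maximal inequality from part (1) in the Chebyshev step. That inequality controls $\|\sup_{0\le t\le 1}|e^{it\Delta}(f-g)|\|_{L^2(\R^2)}$ only by $\|f-g\|_{H^{3/4+}}$, which you have no way to make small: for $f$ merely in $H^{3/8+}$ the quantity $\|f-g\|_{H^{3/4+}}$ need not even be finite, and certainly cannot be driven to $0$ by choosing $g$ close to $f$ in the weaker $H^{3/8+}$ norm. So the superlevel-set bound $C\eps^2/\lambda^2$ does not follow, and the global route proves a.e.\ convergence only for $f\in H^{3/4+}$, which is strictly weaker than the stated claim. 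Your preference for the global bound is precisely the wrong choice here: the whole point of the corollary is that the \emph{local} inequality, which costs only $3/8+$ derivatives, suffices for the pointwise convergence statement.

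The repair is the alternative you mention and then discard. Almost-everywhere convergence is a local statement, so cover $\R^2$ by countably many unit balls $B(x_0,1)$; by translation invariance of $e^{it\Delta}$, Theorem \ref{thm-loc-bound} gives
\begin{equation*}
\Bigl\|\sup_{0\le t\le 1}|e^{it\Delta}(f-g)|\Bigr\|_{L^2(B(x_0,1))}\le C\,\|f-g\|_{H^{3/8+}}<C\eps
\end{equation*}
uniformly in $x_0$. Running your Chebyshev argument on each fixed ball with this bound (plus $\|f-g\|_{L^2}\lesssim\eps$ for the zero-order term) shows the exceptional set in $B(x_0,1)$ has measure zero, and a countable union of null sets is null. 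With that substitution the rest of your argument (pointwise convergence on the Schwartz class via dominated convergence, the $\limsup$ triangle inequality) is correct and gives the claimed $H^{3/8+}$ result.
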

\begin{remark}
As noted in \cite{Lee:2006:schrod-converg} and \cite{Rogers:2008:A-local-smoothing-estimate-for-Schrodinger}, one may obtain similar results for the phase function $e^{ix\xi+it\phi}$ where the phase $\phi$ satisfies: $|\partial_\xi^\alpha \phi|\le C|\xi|^{2-\alpha}$ and $|\nabla \phi|\ge C|\xi|$, and the Hessian of $\phi$ has two eigenvalues of the same sign. This condition on $\phi$ includes the ``elliptic type" defined in \cite{Moyua-Vargas-Vega:1999} and \cite{Tao-Vargas-Vega:1998:bilinear-restri-kakeya}.
\end{remark}
\begin{remark}
Concerning the analogous pointwise convergence question for the nonelliptic Schr\"odinger equation with the phase $\phi=\xi_1^2-\xi_2^2\pm\xi_3^2\pm\cdots \pm\xi_n^2$, the situation is different, where an almost complete answer is known in \cite{Rogers-Vargas-Vega:2006:pt-nonelliptic-NLS}. The sharp local bound was obtained: If $\Box =\partial_{x_1}^2-\partial_{x_2}^2\pm\partial_{x_3}^2\pm\cdots \pm\partial_{x_n}^2$, then
$$ \|\sup_{0\le t\le 1} |e^{it\Box}f|\|_{L^2{B(0,1)}}\le C_s \|f\|_{\dot{H}^{1/2}(\R^2)}.$$
In higher dimensions, there is only the endpoint $s_0=1/2$ missing.
\end{remark}
\textbf{Acknowledgements.} We would like to thank Sanghyuk Lee, Xiaochun Li, and Terence Tao for helpful comments.

\section{Notations}\label{sec:notation}
We list several notations which are used repeatedly in the paper. We fix the spatial dimension $d=2$ and $N\gg 1$.  For $f\in L^2$ satisfying that $\hat{f}$ is supported on the annulus $A(1):=\{1/2\le |\xi|\le 1\}$, we choose $\rho$ to be a suitable indicator function supported in $\{|\xi|\sim 1\}$ such that it equals $1$ on $\supp{f}$; we set
$K(x,t):=\int e^{ix\xi+it|\xi|^2}\rho(\xi)d\xi.$

Let $\chi_N$ be a bump function satisfying $\chi_N(y)=1$ for $|y|\le 4N$ and $=0$ for $|y|\ge 6N$.

$A\lessapprox B$ denotes the statement, for any $\eps>0$, $A\le C_\eps R^{C\eps} B$ for some large $R>0$ which will be clear in the context.

\section{Localization of the Schr\"odinger maximal operator}\label{sec:localization}
Let us first state a lemma before proving Theorem \ref{thm-localization}. Set $t_j:=jN, 1\le j\le N$ and $I_j:=[t_j, t_j-N]$; then $\cup I_j$ is a partition of $[0,N^2]$.
\begin{lemma}\label{le-lee-local}
Suppose $f\in L^2$ such that $\hat{f}$ is supported on $A(1)$. Then for $1\le j\le N$, there exists $f_j$ with compact Fourier support in $A(1)+O(N^{-1})$ such that for $|x|\le N$ and $t\in I_j$, we have
\begin{align}
\label{eq-37} & |e^{it\Delta}f(x)|\le |e^{i(t-t_j)\Delta}f_j(x)|+C_lN^{-l}\|f\|_2,\\
\label{eq-38} & \sum_j \|f_j\|^2_2 \le C\|f\|_2^2.
\end{align}
\end{lemma}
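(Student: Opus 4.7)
The plan is to construct $f_j$ by evolving $f$ forward to time $t_j$, cutting off in physical space to a ball around the origin, and then projecting back to a thin frequency shell around $A(1)$. Concretely, set $g_j := e^{it_j\Delta}f$ and $f_j := P_{\rho_N}(\chi_N g_j)$, where $\chi_N$ is the bump from Section~\ref{sec:notation} and $P_{\rho_N}$ is the Fourier multiplier whose symbol $\rho_N$ is a smooth bump equal to $1$ on $A(1)$ and supported in $A(1)+O(N^{-1})$. Then $\widehat{f_j}$ has the required compact support, and $\|f_j\|_2 \le \|\chi_N g_j\|_2 \le \|g_j\|_2 = \|f\|_2$.

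For the pointwise comparison \eqref{eq-37}, use $e^{it\Delta}f = e^{i(t-t_j)\Delta}g_j$ to reduce the problem to showing that $e^{i(t-t_j)\Delta}((1-\chi_N)g_j)$ and $e^{i(t-t_j)\Delta}(I-P_{\rho_N})(\chi_N g_j)$ are each $O(N^{-l}\|f\|_2)$ pointwise on $\{|x|\le N\}$, uniformly for $t\in I_j$. For the first, I invoke the kernel identity $e^{is\Delta}g_j(x) = \int K(x-y,s)g_j(y)\,dy$ (valid because $\hat g_j\subset A(1)$ and $\rho\equiv 1$ there). In the regime $|x|\le N$, $0\le s\le N$, $|y|\ge 4N$, the gradient of the phase $(x-y)\xi+s|\xi|^2$ is bounded below by $|x-y|/3 \gtrsim N$ on $\supp\rho$, so repeated integration by parts gives $|K(x-y,s)|\le C_l|x-y|^{-l}$; Cauchy--Schwarz in $y$ against $\|g_j\|_2 = \|f\|_2$ then produces the desired decay. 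The second term is controlled by observing that $\widehat{\chi_N g_j} = \widehat{\chi_N}\ast \widehat{g_j}$ inherits the rapid decay of $\widehat{\chi_N}$ (Schwartz at scale $N^{-1}$) outside a thin shell around $A(1)$.

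For the almost-orthogonality \eqref{eq-38}, I would argue by $T^*$: it suffices to show that the map $(\phi_j)\mapsto \sum_j e^{-it_j\Delta}(\chi_N P_{\rho_N}\phi_j)$ is bounded from $\ell^2_j L^2_x$ to $L^2_x$. Writing $h_j := \chi_N P_{\rho_N}\phi_j$ (physical support in $B(0,6N)$, Fourier concentration in $A(1)+O(N^{-1})$), the squared norm expands into a diagonal $\sum_j\|h_j\|_2^2 \le \sum_j\|\phi_j\|_2^2$ plus off-diagonal inner products $\langle e^{i(t_k-t_j)\Delta}h_j, h_k\rangle$. Stationary phase applies in the complementary regime: for $|k-j|$ larger than a fixed constant, $|x-y|\le 12N$ is much smaller than $|t_k-t_j| = |k-j|N$, placing the stationary point $\xi_c = -(x-y)/(2(t_k-t_j))$ outside $\supp\rho$; integration by parts then yields $|K(x-y,t_k-t_j)|\le C_l(|k-j|N)^{-l}$, whence Cauchy--Schwarz gives
\[
|\langle e^{i(t_k-t_j)\Delta}h_j, h_k\rangle|\le C_l N^{2-l}|k-j|^{-l}\|\phi_j\|_2\|\phi_k\|_2,
\]
which is summable in $(j,k)$ for $l\ge 2$ by Schur's test; the $O(1)$ near-diagonal pairs contribute at most $C\sum_j\|\phi_j\|_2^2$.

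The main obstacle I expect is coordinating the two cutoffs in \eqref{eq-37}: the stationary-phase argument for the spatial tail is clean, but the $O(N^{-1})$ Fourier cutoff only gives constant-in-$N$ gains per order of smoothness of $\chi_N$. To genuinely obtain $O(N^{-l}\|f\|_2)$ for every $l$, one may have to take $\rho_N$ supported in the slightly larger shell $A(1)+O(N^{\eps-1})$, with the $N^{\eps}$ absorbed into the $\lessapprox$ convention of Section~\ref{sec:notation}.
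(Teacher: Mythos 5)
Your construction is correct in outline but takes a genuinely different route from the paper, in both halves. For the construction and \eqref{eq-37}: the paper does not use a sharp spatial cutoff followed by a frequency projection; it uses a band-limited partition of unity $\sum_{x_0\in N\Z^2}\eta\bigl(\tfrac{x-x_0}{N}\bigr)=1$ with $\eta$ having \emph{compactly supported} Fourier transform in $B(0,1/2)$, discards the pieces with $|x_0|\gtrsim N$ by exactly the non-stationary-phase kernel bound you describe, and sets $f_j=\eta(\tfrac{\cdot}{N})e^{-it_j\Delta}f$. This makes your second error term vanish identically: multiplication by $\eta(\cdot/N)$ enlarges the Fourier support by exactly $O(N^{-1})$, so no projection $P_{\rho_N}$ is needed and the stated support $A(1)+O(N^{-1})$ comes for free. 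The obstacle you flagged is real: with $\rho_N$ supported in $A(1)+O(N^{-1})$, the tail $\bigl(I-P_{\rho_N}\bigr)(\chi_N g_j)$ gains only a constant per derivative of $\chi_N$, not powers of $N$, so it is \emph{not} $O(N^{-l}\|f\|_2)$; your widening to $A(1)+O(N^{\eps-1})$ does repair the estimate, but note it yields a slightly weaker support conclusion than the lemma states (harmless for Theorems \ref{thm-localization} and \ref{thm-loc-bound}), and the justification is not really the $\lessapprox$ convention of Section \ref{sec:notation} (that concerns constants in estimates, not Fourier supports). The price of the paper's choice is that $\eta(\cdot/N)$ is not compactly supported in space, which is why the paper needs the sum over $|y_0|\ge 2N$ and rapid decay rather than a single sharp cutoff.

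For \eqref{eq-38} your $T^{*}$ almost-orthogonality argument, exploiting $|t_j-t_k|\gtrsim N$ against spatial supports of size $O(N)$ so that the stationary point $-(x-y)/(2(t_k-t_j))$ falls outside $\{|\xi|\sim1\}$, is sound and arguably more systematic than the paper's proof, which instead reduces to $\hat f$ supported on a single cap of radius $N^{-1}$ (using orthogonality of caps) and exploits the group-velocity separation $2t_j\xi_0$ of the evolutions across $j$; your time-separation argument avoids that cap reduction entirely. Two small points to tighten: (i) $h_j=\chi_N P_{\rho_N}\phi_j$ no longer has compactly supported Fourier transform, so the kernel bound $|K(x-y,t_k-t_j)|\le C_l(|k-j|N)^{-l}$ does not apply verbatim; insert a fixed fattened frequency cutoff $P_{\tilde\rho}$ with $\tilde\rho\equiv1$ on a neighborhood of $A(1)$, run the integration by parts on that kernel, and control $(I-P_{\tilde\rho})h_j$ by the rapid decay of $\widehat{\chi_N}$ at distance $\gtrsim1$, which costs only $O(N^{-M})$ per pair and is summable over the $O(N^2)$ pairs; (ii) the Schur summation needs $l\ge3$ (or any fixed $l>2$) to absorb the $N^{2}$ from the two $L^1$--$L^2$ conversions on $B(0,6N)$, which your $C_lN^{2-l}|k-j|^{-l}$ bound of course permits.
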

This statement is essentially same as \cite[Lemma 2.1]{Lee:2006:schrod-converg}. We present an alternative proof by using the stationary phase analysis.
\begin{proof} Let $\eta$ be a $C^\infty$ function with compact Fourier support in $B(0,1/2)$. Then by Poisson summation formula,
  $$ \sum_{k\in \Z^2} \eta (x-k)=1.$$
Let $\mathcal{X}=N\Z^2$. Then $\sum_{x_0\in \mathcal{X}} \eta(\frac {x-x_0}{N})=1.$  We write
\begin{equation}\label{eq-39}
\begin{split}
e^{it\Delta} f(x)
&=\int e^{ix\xi+i(t-t_j)|\xi|^2}\rho(\xi)e^{it_j|\xi|^2}\hat{f}(\xi)d\xi=\int \int e^{i(x-y)\xi+i(t-t_j)|\xi|^2}\rho(\xi)d\xi e^{-it_j\Delta}f dy\\
&=\sum_{y_0\in \mathcal{X}}\int \int e^{i(x-y)\xi+i(t-t_j)|\xi|^2}\rho(\xi)d\xi\eta(\frac {y-y_0}{N})e^{-it_j\Delta}f(y)dy.
\end{split}
\end{equation}Since $\partial_\xi((x-y)\xi+(t-t_j)|\xi|^2)=x-y+2(t-t_j)\xi$ and $1/2\le |\xi|\le1$, then for given $|x|\le N$, $|t-t_j|\le N$,
$$|x-y+2(t-t_j)\xi|\ge 10N \Rightarrow |y|\ge 4N. $$
In this case, the stationary phase estimate gives
$$\left|\int e^{i(x-y)\xi+i(t-t_j)|\xi|^2}\rho(\xi)d\xi \right|\lesssim N^{-100}(1+|y|)^{-100}.$$
This further gives
\begin{equation}\begin{split}
&\left| \sum_{y_0\in \mathcal{X}: |y_0|\ge 2N}\int \int e^{i(x-y)\xi+i(t-t_j)|\xi|^2}\rho(\xi)d\xi\eta(\frac {y-y_0}{N})e^{-it_j\Delta}f(y)dy \right|\\
&\lesssim N^{-100} \left|\sum_{y_0\in \mathcal{X}: |y_0|\ge 2N} \int (1+|y|)^{-100} \eta(\frac {y-y_0}{N})e^{-it_j\Delta}f(y)dy \right|\\
&\lesssim N^{-100} \left|\sum_{y_0\in \mathcal{X}: |y_0|\ge 2N} \int (1+|y|)^{-100} \eta(\frac {y-y_0}{N})e^{-it_j\Delta}f(y)dy \right|\\
&\lesssim N^{-100} \|(1+|y|)^{-100}\|_2 \left\|\bigl(\sum_{y_0\in \mathcal{X}: |y_0|\ge 2N} \eta(\frac {y-y_0}{N})\bigr)e^{-it_j\Delta}f(y) \right\|_2\\
&\lesssim N^{-100} \|f\|_2.
\end{split}
\end{equation}
So we may only concern the terms with $y_0\in \mathcal{X}$ and $|y_0|\le 2N$; these are finitely many terms. Without loss of generality, we assume $y_0=0$; so the term becomes
$$ \int \int e^{i(x-y)\xi+i(t-t_j)|\xi|^2}\rho(\xi)d\xi\eta(\frac {y}{N})e^{-it_j\Delta}f(y)dy.$$
Then $f_j$ is found by setting $f_j:=\eta(\frac {y}{N})e^{-it_j\Delta}f$. Thus \eqref{eq-37} follows.

To show \eqref{eq-38}, we assume $\hat{f}$ is supported on a cap $C(\xi_0, N^{-1})$of radius $N^{-1}$. The general case follows from a decomposition in the Fourier space and the orthogonality due to almost disjoint Fourier supports. Then we aim to show that $\sum_j \|f_j\|_2^2\lesssim \|f\|_2^2$, i.e.,
$$\sum_j \left\|\eta (\frac yN) e^{-it_j\Delta}f \right\|_2^2 \lesssim \|f\|_2^2.$$
We write out the left hand side,
\begin{equation}\label{eq-46}
\begin{split}
\sum_j \left\|\eta (\frac yN) e^{-it_j\Delta}f\right\|_2^2 &=\sum_j \int \eta^2(\frac yN) \left(\sum_{z_0\in \mathcal{X}}\int\int e^{i(y-z)\xi +it_j|\xi|^2} \rho(\xi) d\xi  \eta(\frac {z-z_0}{N})f(z)dz\right)^2 dy.
\end{split}
\end{equation}
We know that $\partial_\xi ((y-z)\xi +t_j|\xi|^2 )=(y-z) +2t_j \xi =-z+y+2t_j(\xi-\xi_0)+2t_j\xi_0$ and $t_j|\xi-\xi_0|\le 2N$; so for given $y$ and $t_j$, if $|-z+y+2t_j(\xi-\xi_0)+2t_j\xi_0|\ge 10N$, i.e., $|z-y-2t_j\xi_0|\ge 4N$, then
$$ \left|\int e^{i(y-z)\xi +it_j|\xi|^2} \rho(\xi) d\xi \right| \lesssim N^{-100} \left(1+|z-(y+2t_j\xi_0)|\right)^{-100}.$$
We collect such $z$ into a set $\Omega_{j,y}$. So by Cauchy-Schwarz,
\begin{equation}\label{eq-47}
\begin{split}
 &\left(\sum_{z_0\in \mathcal{X}}\int_{\Omega_{j,y}}\int e^{i(y-z)\xi +it_j|\xi|^2} \rho(\xi) d\xi  \eta(\frac {z-z_0}{N})f(z)dz\right)^2 \\
 &\lesssim N^{-200} \int_{\Omega_{j,y}} \left(1+|z-(y+2t_j\xi_0)|\right)^{-200} dz \int_{\Omega_{j,y}} \left| \sum_{z_0\in \mathcal{X}}\xi(\frac {z-z_0}{N})f(z)\right|^2 dz\\
 &\lesssim N^{-200} \|f\|_2^2
\end{split}
\end{equation} uniform in $j$ and $y$. so if plugging this back into \eqref{eq-46}, since $\#{j}\lesssim N$, we see that
\begin{equation}\label{eq-48}
\sum_j \int \eta^2(\frac yN) \left(\sum_{z_0\in \mathcal{X}}\int_{\Omega_{j,y}}\int e^{i(y-z)\xi +it_j|\xi|^2} \rho(\xi) d\xi  \eta(\frac {z-z_0}{N})f(z)dz\right)^2 dy\le N^{-50 } \|f\|^2_2.
\end{equation}
So we are left with the term
$$\sum_j \int \eta^2(\frac yN) \left(\sum_{z_0\in \mathcal{X}}\int \int e^{i(y-z)\xi +it_j|\xi|^2} \rho(\xi) d\xi  \eta(\frac {z-z_0}{N})f(z)dz\right)^2 dy,$$
where for given $j$ and $y$, $z$ is restricted to the set $\{z:\,|z-(y+2t_j\xi_0)|\le 4N\}$; under this constraint, there are only finitely many $z_0$'s in the summation $\sum_{z_0\in \mathcal{X}}$. Then we are left with proving
\begin{equation}\label{eq-49}
\sum_j \int \eta^2(\frac yN) \sum_{z_0\in \mathcal{X}} \left(\int \int e^{i(y-z)\xi +it_j|\xi|^2} \rho(\xi) d\xi  \rho(\frac {z-z_0}{N})\chi(\frac {z-z_0}{N})f(z)dz\right)^2 dy\lesssim \|f\|_2^2.
\end{equation}
Note that we have added a constraint $\chi(\frac {z-z_0}{N})$ in the inner integrand by the discussion above, where $\chi$ denotes a bump function adapted to the ball $B(0,C)$ for some $C>0$ uniform in $j$ and $y$. The left hand side of \eqref{eq-49} equals
\begin{equation}\label{eq-50}
\sum_{z_0\in \mathcal{X}}\int \eta^2(\frac {y}{N})\sum_j \left(\int \int e^{i(y-z)\xi +it_j|\xi|^2} \rho(\xi) d\xi  \rho(\frac {z-z_0}{N})\chi(\frac {z-z_0}{N})f(z)dz\right)^2 dy.
\end{equation}
Given $z_0$ and $y$, there are only finitely many $j$ such that $|(y-z)+2t_j\xi_0|\le N$, due to the fact that $|(t_j-t_i)\xi_0|\gtrsim N$ for $i\neq j$ and the compact support of $\chi$. We also know $\# j\lesssim N$, one more application of stationary phase analysis eliminates the summation in $j$ and then reduces to showing
\begin{equation}\label{eq-51}
\sum_{z_0\in \mathcal{X}} \int \eta^2(\frac yN)\left(\int \int e^{i(y-z)\xi +it_j|\xi|^2} \rho(\xi) d\xi  \rho(\frac {z-z_0}{N})\chi(\frac {z-z_0}{N})f(z)dz\right)^2 dy\lesssim \|f\|_2^2.
\end{equation}
By Plancherel, the left hand side of \eqref{eq-51}
\begin{equation}\label{eq-52}
\begin{split}
\lesssim &\sum_{z_0\in \mathcal{X}} \int \left(\int \int e^{i(y-z)\xi +it_j|\xi|^2} \rho(\xi) d\xi  \rho(\frac {z-z_0}{N})\chi(\frac {z-z_0}{N})f(z)dz\right)^2 dy\\
\lesssim &\sum_{z_0\in \mathcal{X}} \int \left(\int e^{iy\xi +it_j|\xi|^2} \rho(\xi) \mathcal{F}_z\bigl(\rho(\frac {z-z_0}{N})\chi(\frac {z-z_0}{N})f\bigr) d\xi\right)^2 dy\\
& =\sum_{z_0\in \mathcal{X}} \int \rho^2(\frac {z-z_0}{N})\chi^2 (\frac {z-z_0}{N})f^2(z)dz \lesssim \|f\|^2_2.
\end{split}
\end{equation}Here $\mathcal{F}_z(f)$ denotes the Fourier transform of $f$ in $z$. Hence \eqref{eq-38} follows. Therefore the proof of Lemma \ref{le-lee-local} is complete.
\end{proof}
Now we are ready to prove Theorem \ref{thm-localization}.
\begin{proof}[Proof of Theorem \ref{thm-localization}.]
We only deal with \eqref{eq-4a} as the proof of \eqref{eq-4b} is similar. By \eqref{eq-37}, for $|x|\le N$,
\begin{equation}\label{eq-42}
\sup_{0\le t\le N^2} |e^{it\Delta} f|^2 \le \sum_j \sup_{t\in I_j} |e^{it\Delta} f|^2 \le \sum_j \sup_{t\in I_j} |e^{i(t-t_j)\Delta} f_j|^2+ C_l N^{-l}\|f\|_2^2
\end{equation} for any $l\in \N$. Then we see that \eqref{eq-3} and \eqref{eq-38} imply
\begin{equation}
\begin{split}
\int_{|x|\le N} \sup_{0\le t\le N^2} |e^{it\Delta} f|^2 dx &
\le \int_{|x|\le N} \sup_{0\le s\le N} |e^{-is\Delta}f_j|^2 dx +C_lN^{-l} \|f\|_2^2 \\
&\le CN^{2\alpha}\sum_j \|f_j\|_2^2+ C_lN^{-l} \|f\|_2^2\\
&\le CN^{2\alpha}\|f\|_2^2.
\end{split}
\end{equation} Thus \eqref{eq-4a} follows.
\end{proof}

\section{Equivalence between local and global conjectures}\label{sec:equiv}
In this section, we use the stationary phase analysis to establish the equivalence between the local conjecture on Schr\"odinger maximal functions, Theorem \ref{thm-equiv}.

By Littlewood-Paley decompositions and Theorem \ref{thm-localization}, to establish the equivalence, it suffices to show that
\begin{proposition}\label{prop-equiv}
Let $s>0$ and $N\gg 1$. Suppose $f$ satisfies $\|f\|_2=1$, and $\supp\hat{f}$ is supported by $\{1/2\le|\xi|\le 1\}$. Then the following estimates are equivalent:
\begin{align}
\label{eq-30} & \left\|\sup_{0\le t\le N} |e^{it\Delta}f|\right\|_{L^2(B(0,N))} \le C N^{s+}, \text{ and } \\
\label{eq-31} &\left\|\sup_{0\le t\le N^2} |e^{it\Delta}f|\right\|_{L^2(\R^2)} \le C N^{2s+}.
\end{align}
\end{proposition}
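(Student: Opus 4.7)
The easy half of the equivalence, \eqref{eq-31}$\Rightarrow$\eqref{eq-30}, follows by applying \eqref{eq-31} at the rescaled parameter $N^{1/2}$, which is legitimate since the hypothesis is for all $N\gg 1$: this gives $\|\sup_{0\le t\le N}|e^{it\Delta}f|\|_{L^2(\R^2)}\le CN^{s+}\|f\|_2$, and restricting to $B(0,N)$ yields \eqref{eq-30}. The substantive half is \eqref{eq-30}$\Rightarrow$\eqref{eq-31}, for which I would combine a rescaling of the hypothesis with a spatial localization of $f$ implemented via stationary phase.

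Applying \eqref{eq-30} at scale $M:=N^2$ and using spatial translation invariance gives
\[
\left\|\sup_{0\le t\le N^2}|e^{it\Delta}f|\right\|_{L^2(B(x_0,N^2))} \le CN^{2s+}\|f\|_2
\]
for every $x_0\in\R^2$. Cover $\R^2$ by the family $\{B(x_0,N^2)\}_{x_0\in(N^2)\Z^2}$, which has bounded overlap, and fix a smooth partition of unity $1=\sum_{x_0}\psi_{x_0}$ with each $\psi_{x_0}$ supported in $B(x_0,CN^2)$ for a suitable absolute constant $C$. For $x\in B(x_0,N^2)$ and $t\in[0,N^2]$, the representation $e^{it\Delta}f(x)=\int K(x-y,t)f(y)\,dy$ and stationary phase (exactly as in Lemma \ref{le-lee-local}: on $y\notin\supp\psi_{x_0}$ the phase $(x-y)\xi+t|\xi|^2$ is nonstationary once $|x-y|\gg t$, which holds for $C$ large enough) yield
\[
e^{it\Delta}f(x)=e^{it\Delta}(\psi_{x_0}f)(x)+\mathrm{Err}_L(x,t,x_0),\qquad |\mathrm{Err}_L|\lesssim_L N^{-L}\|f\|_2
\]
uniformly in $t$ for any $L$. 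Applying the rescaled hypothesis to $\psi_{x_0}f$ and summing,
\[
\int_{\R^2}\sup_{0\le t\le N^2}|e^{it\Delta}f|^2\,dx \lesssim \sum_{x_0}\int_{B(x_0,N^2)}\sup_t|e^{it\Delta}(\psi_{x_0}f)|^2\,dx + (\mathrm{error}) \lesssim N^{4s+}\sum_{x_0}\|\psi_{x_0}f\|_2^2 \lesssim N^{4s+}\|f\|_2^2,
\]
using the bounded overlap of $\psi_{x_0}$. Taking square roots gives \eqref{eq-31}.

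The main obstacle is to show that the error sum really is negligible once summed over the infinite lattice $(N^2)\Z^2$. This is done by pairing the rapid kernel decay $|K(x-y,t)|\lesssim_L (1+|x-y|)^{-L}$ on the region $|x-y|\gg t$ against $|f(y)|$ via Cauchy--Schwarz; the resulting sum in $x_0$ collapses by the pointwise estimate $\sum_{x_0\in(N^2)\Z^2,\,|y-x_0|\gtrsim N^2}(1+|y-x_0|)^{-L}\lesssim N^{-2L}$ in dimension two, giving total error $C_LN^{-L}\|f\|_2^2$ for arbitrarily large $L$. A secondary issue is the $O(N^{-2})$ Fourier spreading of $\psi_{x_0}f$ outside $A(1)$ caused by the multiplication by the spatial cutoff; this is absorbed by choosing the auxiliary symbol $\rho$ of Section \ref{sec:notation} to equal $1$ on a slight thickening of $A(1)$, so that $K\ast(\psi_{x_0}f)=e^{it\Delta}(\psi_{x_0}f)$ remains a valid kernel representation, or alternatively by a Littlewood--Paley splitting with the off-annulus piece $L^2$-small via Schwartz decay of $\widehat{\psi_{x_0}}$.
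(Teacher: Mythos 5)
Your argument is correct and is essentially the paper's own proof of Proposition \ref{prop-equiv}: partition $\R^2$ into balls of radius $N^2$, localize $f$ spatially to slightly enlarged balls, dispose of the far part by the nonstationary-phase decay of the kernel $K$ (the paper invokes Schur's test where you use Cauchy--Schwarz plus the lattice tail sum), and apply the translated local bound at scale $N^2$ to each localized piece, summing via bounded overlap. Your explicit treatment of the error sum over the lattice and of the slight Fourier spreading of $\psi_{x_0}f$ (which the paper sidesteps by writing the main term through $K\ast(f\chi_{j,N})$, i.e.\ with the cutoff $\rho$ built in) simply makes precise steps the paper leaves implicit, including the fact that the hypothesis \eqref{eq-30} must be used at scale $N^2$ and at all spatial translates.
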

\begin{proof}
Firstly \eqref{eq-31} implies \eqref{eq-30} easily. We need to establish the reverse implication. Suppose \eqref{eq-30} holds. For $0<t<N^2$, writing
\begin{equation}\label{eq-32}
e^{it\Delta}f(x)=\int\int e^{i(x-y)\xi+it|\xi|^2}\rho(\xi)d\xi f(y)dy =K(t) \ast f(x),
\end{equation}where $K(x,t):=\int e^{ix\xi+it|\xi|^2}\rho(\xi)d\xi$. We make the following observation, if we localize $x$ in a ball $B(x, N^2)$ for some $x\in \R^2$, in view of $|t|\le N^2$, then the stationary phase analysis on the kernel $K$ suggests that $y$ should be somehow localized in a slightly larger ball but with the same center, say $B(x,2N^2)$. Thus to show \eqref{eq-31}, we partition $\R^2$ into finitely overlapping balls $\cup_{j\in \Z^2} B(x_j, N^2)$. Then
\begin{equation}\label{eq-33}
\begin{split}
\left\|\sup_{0\le t\le N^2} |e^{it\Delta}f|\right\|_{L^2(\R^2)}^2 & \lesssim \sum_j \int_{B(x_j,N^2)}\left| \int\int e^{i(x-y)\xi+it|\xi|^2}\rho(\xi)d\xi f(y)dy\right|^2dx\\
&\lesssim \sum_j \int_{B(x_j,N^2)}\left| \int\int e^{i(x-y)\xi+it(x)|\xi|^2}\rho(\xi)d\xi f(y)\chi_{j,N}(y)dy\right|^2dx\\
&\qquad +\sum_j \int_{B(x_j,N^2)}\left| \int\int e^{i(x-y)\xi+it(x)|\xi|^2}\rho(\xi)d\xi f(1-\chi_{j,N})dy\right|^2dx,\\
&=\sum_j \int_{B(x_j,N^2)} \left|K(t(x))\ast f\chi_{j,N}\right|^2+\left|K(t(x))\ast f(1-\chi_{j,N})\right|^2 dx
\end{split}
\end{equation}where $|t(x)|\le N^2$ and $\chi_{j,N}$ is an indicator function of the ball $B(x_j, 4N^2)$ and supported in ball $B(x_j, 6N^2)$. By the stationary phase analysis on the kernel $K$, if $|y-x_j|>4N^2$ and $x\in B(x_j,N^2)$,
\begin{equation}\label{eq-34}
|K(x-y, t(x))|\lesssim C N^{-100}(1+|y-x|)^{-100}.
\end{equation}
For the second term above, by Schur's test, we have
\begin{equation}\label{eq-35}
\sum_j \int_{B(x_j,N^2)} \left|K(t(x))\ast f(1-\chi_{j,N})\right|^2 dx \lesssim \|f\|_2^2.
\end{equation}
From the first term above, by \eqref{eq-30}, we see that
\begin{equation}\label{eq-36}
\sum_j \int_{B(x_j,N^2)} \left|K(t(x))\ast f\chi_{j,N}\right|^2 dx \lesssim N^{2s+}\sum_j \int |f\chi_{j,N}|^2dy \lesssim N^{2s+} \|f\|_2^2.
\end{equation}
Thus \eqref{eq-31} follows from \eqref{eq-35} and \eqref{eq-36}. This completes the proof of Proposition \ref{prop-equiv}.
\end{proof}

\section{Proof of Theorem \ref{thm-loc-bound} via the wave packets decomposition and induction on scales}
In this section, we aim to show that
\begin{equation*}
\|\sup_{0\le t\le 1} |e^{it\Delta} f|\|_{L_x^2(B(0,1))} \le C\|f\|_{H^{s_0+}}, \text{ for } s_0=3/8.
\end{equation*}
Then by Littlewood-Paley decomposition and scaling, it reduces to show that, for $f\in L^2$ such that $\hat{f}$ is supported by $A(1)$, and for any $N\gg 1$,
\begin{equation}\label{eq-2}
\|\sup_{0\le t\le N^2} |e^{it\Delta} f|\|_{L_x^2(B(0,N))} \le C N^{s_0+}\|f\|_2, \text{ for } s_0=3/8.
\end{equation}
By Theorem \ref{thm-localization}, it reduces to prove, for the same $f$ as above,
\begin{equation}\label{eq-5}
\|\sup_{0\le t\le N} |e^{it\Delta} f|\|_{L_x^2(B(0,N))} \le CN^{s_0+}\|f\|_2, \text{ for } s_0=3/8.
\end{equation}
We are going to localize the support of $f$ in the spatial space by analyzing the kernel of $e^{it\Delta}f$ for a frequency localized $f$: Writing
$$e^{it\Delta}f(x)= \int \int e^{i(x-y)\xi +it|\xi|^2}\rho(\xi)d\xi f(y)dy.$$
From the stationary phase analysis, the critical point of the phase function $(x-y)\xi +t|\xi|^2$ occurs where $|x-y|\sim |t|$, which implies $|y|\le 2N$ as $|x|\le N$ and $|t|\le N$. So we split the integral above into two parts,
\begin{equation}\label{eq-6}
\int \int e^{i(x-y)\xi +it|\xi|^2}\rho(\xi)d\xi f(y)\chi_{N}(y)dy+\int \int e^{i(x-y)\xi +it|\xi|^2}\rho(\xi)d\xi f(1-\chi_{N})dy.
\end{equation}
By the kernel estimate, the second term is less than $CN^{-50}\|f\|_2$, which is thus negligible if integrating in $B(0,N)$. Hence we are focusing on proving
\begin{equation}\label{eq-7}
\left\|\sup_{0\le t\le N}\int \int e^{i(x-y)\xi +it|\xi|^2}\rho(\xi)d\xi f(y)\chi_{N}(y)dy\right\|_{L^2_x(B(0,N))}\le CN^{s_0+}\|f\|_2
\end{equation}
Let \begin{equation}\label{eq-43}
F(x,t):=\int \int e^{i(x-y)\xi +it|\xi|^2}\rho(\xi)d\xi f(y)\chi_{N}(y)dy.
 \end{equation} We denote the inverse Fourier transform of $f$ by $\mathcal{F}^{-1}(f)$ and $\delta$ denotes the Dirac mass. Then We rewrite \eqref{eq-43} as $$F(t,x)=\int e^{ix\xi+it|\xi|^2}\rho(\xi) \mathcal{F}_y(f\chi_{N})d\xi=\mathcal{F}^{-1}\left(\int \delta(\tau-|\xi|^2)e^{ix\xi}\rho\mathcal{F}(f\chi_N)d\xi \right),$$
which implies that the Fourier transform of $F$ in $t$ is supported on an interval of size $\sim 1$. Hence by Bernstein's inequality, for fixed $x$,
$$\sup_{0\le t\le N} |F(t)| \le \sup_{t\in \R} |F(t)| \le \|F\|_{L^4_t(\R)}.$$
Then to prove \eqref{eq-7}, it suffices to prove
\begin{equation}\label{eq-8}
\left\|F\right\|_{L^2_xL^4_t(B(0,N)\times \R)}= \left\|\int \int e^{i(x-y)\xi +it|\xi|^2}\rho(\xi)d\xi f\chi_{N}dy\right\|_{L^2_xL^4_t(B(0,N)\times \R)}\le CN^{s_0+}\|f\|_2.
\end{equation}
Because our function $f\chi_N$ has compact support $|x|\le N$, we can restrict $t$ to the interval $|t|\le 4N$ from the stationary phase analysis.

To prove \eqref{eq-8}, by time translation invariance, it suffices to establish
\begin{equation}\label{eq-9}
\left\|\int \int e^{i(x-y)\xi +it|\xi|^2}\rho(\xi)d\xi f\chi_{N}dy\right\|_{L^2_xL^4_t(Q_N)}\le CN^{s_0+}\|f\|_2,
\end{equation}where $$Q_N:=\{(x,t):\, x\in B(0,N), N/2\le t\le N\}. $$
This is implied by a more general estimate,
\begin{proposition}\label{prop-local-L2L4}
Let $\hat{f}$ be supported on $\{1/2\le |\xi|\le 1\}$. Then for $N\gg 1$,
\begin{equation}\label{eq-10}
\left\|\int e^{ix\xi +it|\xi|^2}\rho\hat{f}d\xi \right\|_{L^2_xL^4_t(Q_N)}\le CN^{s_0+}\|f\|_2 \text{ for }s_0=3/8.
\end{equation}
\end{proposition}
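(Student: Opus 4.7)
The plan is to prove \eqref{eq-10} via the bilinear-wave-packets-induction-on-scales framework of Wolff \cite{Wolff:2001:restric-cone} and Tao \cite{Tao:2003:paraboloid-restri}, adapted as in Lee \cite{Lee:2006:schrod-converg} so as to exploit the spatial localization that has already been built into the reduction. Set $Ef(x,t) := \int e^{ix\xi + it|\xi|^2}\rho(\xi)\hat f(\xi)\,d\xi$.

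First I would pass to a bilinear formulation. Squaring the target norm and applying Cauchy--Schwarz in $t$ yields
$$\|Ef\|^2_{L^2_x L^4_t(Q_N)} = \|Ef\cdot\overline{Ef}\|_{L^1_x L^2_t(Q_N)}.$$
A Whitney-type decomposition of $\{(\xi_1,\xi_2):1/2\le|\xi_j|\le 1\}^2$ away from the diagonal splits the frequency support of the bilinear form into pairs of transversal dyadic caps $(c_1,c_2)$ of common radius $\mu\in[N^{-1/2},1]$ with $\mathrm{dist}(c_1,c_2)\sim\mu$; the remaining non-transversal piece is absorbed by Plancherel/orthogonality at the critical scale $\mu\sim N^{-1/2}$. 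It therefore suffices to prove, for each pair of transversal caps and any $f_j$ with $\supp \hat{f}_j\subset c_j$, a bilinear estimate of the form
$$\|Ef_1\cdot \overline{Ef_2}\|_{L^1_x L^2_t(Q_N)} \lessapprox \mu^{a}\,N^{3/4}\|f_1\|_2\|f_2\|_2$$
with some positive power $a$ that is large enough to permit summation over the Whitney scales $\mu$.

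The core step is to establish this transversal bilinear bound using wave packets. Applying Lemma \ref{le-wave-packet} at the appropriate scale, I decompose each $f_j$ as a superposition of $L^2$-normalized wave packets $\phi^j_T$ concentrated on tubes $T$ of thickness $\mu^{-1}$ and length $\sim N$ whose axes point in the direction normal to the paraboloid at the center of $c_j$. Transversality of the caps passes to transversality of the tube families, and the main input is Tao's sharp bilinear $L^p_{t,x}$ restriction estimate for the paraboloid, applied on rescaled unit caps and transported back to caps of radius $\mu$. The new ingredient here is that I work on the bilinear side with $L^1_x L^2_t$ rather than $L^1_x L^\infty_t$: since $f\chi_N$ has been localized to $B(0,4N)$ and $Ef$ has $t$-Fourier support of size $O(1)$, Bernstein and Plancherel in $t$ trade the factor of $N$ that an $L^\infty_t$ bound would cost for a factor of $N^{1/2}$, which is exactly the improvement that takes the Strichartz exponent $s_0=1/2$ down to $s_0=3/8$.

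Finally I would close the argument by induction on the scale $N$. Parabolic rescaling sends the transversal bilinear problem at cap radius $\mu$ and scale $N$ to the same problem at unit cap radius and a reduced scale $\tilde N\sim \mu^2 N$ (after translating in $x$ to accommodate the sheared copy of $Q_N$); invoking the induction hypothesis at $\tilde N$ and tracking the rescaling Jacobian yields the claimed bilinear bound with a positive power of $\mu$. Summing over the Whitney dyadic scales, with the $R^{C\eps}$ loss absorbed into the $\lessapprox$ notation, produces the final exponent $3/8+\eps$. The main obstacle is the bookkeeping at this saturating exponent: the $\mu$-summation barely converges, so one must verify carefully that the Bernstein-in-$t$ improvement in the previous paragraph provides exactly the factor $N^{-1/2}$ needed, and that the rescaling of the $Q_N$ geometry is compatible with the inductive hypothesis at scale $\tilde N$.
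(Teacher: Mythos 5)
Your top-level reduction matches the paper's: squaring to the bilinear bound \eqref{eq-13}, a Whitney decomposition into transversal cap pairs, and parabolic rescaling of each pair (the sub-critical scales $2^{2j}N\le 1$, which you dismiss as ``absorbed by orthogonality at the critical scale'', are handled in the paper by the $L^2_{t,x}\times L^2_xL^\infty_t$ estimates of Proposition \ref{prop-tiny-support}; this needs an argument, but a short one). The genuine gap is in what you call the core step. Tao's sharp bilinear restriction theorem is an $L^q_{t,x}$ estimate with $q>5/3$, and it cannot simply be ``transported'' to the mixed norm that is needed here: to reach $L^1_xL^2_t$ on a region of spatial radius $R$ by H\"older one is forced to $q\ge 2$, and then $\|e^{it\Delta}f\,e^{it\Delta}g\|_{L^1_xL^2_t}\le CR\,\|e^{it\Delta}f\,e^{it\Delta}g\|_{L^2_{t,x}}\lesssim R\|f\|_2\|g\|_2$, which only reproduces $s_0=1/2$, i.e.\ nothing beyond the Strichartz bound \eqref{eq-12}. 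The content of the proposition is precisely that the mixed-norm bilinear estimate of Proposition \ref{prop-bilinear}, $\|e^{it\Delta}f\,e^{it\Delta}g\|_{L^1_xL^2_t(|x|\le\lambda R,\,|t|\sim R)}\lesssim R^{3/4+}\|f\|_2\|g\|_2$, must be proved from scratch by rerunning the Wolff--Tao wave-packet and induction-on-scales argument in the norm $L^1_xL^2_t$: one needs the partition of $P$ into cells of size $R^{1-\delta}$, the relation $\sim$ between tubes and cells, the $L^2(p)$ estimate \eqref{eq-24} for non-related tube pairs (this, followed by Cauchy--Schwarz in $x$ over a ball of radius $R$, is where the exponent $3/4$ actually comes from), and the inductive statement of Proposition \ref{prop-induction-hypo}. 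Your sketch mentions wave packets but never sets up this induction; the ``induction on the scale $N$'' you describe is only the parabolic rescaling taking cap radius $\mu$ to unit caps at scale $\tilde N\sim\mu^2N$, and ``invoking the induction hypothesis at $\tilde N$'' there is circular, because the unit-cap transversal mixed-norm estimate at scale $\tilde N$ is exactly what remains to be proven.

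A second, related omission concerns the geometry after rescaling: for a Whitney pair at scale $2^j=\mu$ the spatial ball $|x|\le N$ becomes $|x|\le \mu N=\lambda R$ with $R=\mu^2N$ and $\lambda=\mu^{-1}\gg1$, so the spatial region is far larger than the time scale. The paper handles this with the localization of Lemma \ref{le-lee-local}: the region is split into $\sim\lambda^2$ parallelepipeds of spatial size $R$, on each of which $f,g$ may be replaced by localized pieces $f_k,g_k$ with $\sum_k\|f_k\|_2^2\lesssim\|f\|_2^2$, and only then does the induction at scale $R$ apply. You allude to ``the sheared copy of $Q_N$'' but never perform this reduction; without it the wave-packet argument does not close on a region of diameter $\lambda R\gg R$. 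Two smaller points: the gain from $1/2$ to $3/8$ is not produced by the Bernstein-in-$t$ step (that reduction is already built into the statement \eqref{eq-10} and by itself yields only $1/2$), but by the $R^{3/4+}$ bilinear bound above; and the wave packets at scale $R$ have thickness $R^{1/2}$, not $\mu^{-1}$.
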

This proposition is proven by following a bilinear approach of using the wavepacket analysis and induction-on-scale argument due to Wolff \cite{Wolff:2001:restric-cone} and Tao \cite{Tao:2003:paraboloid-restri}; more precisely, we will follow the framework designed by Tao \cite{Tao:2003:paraboloid-restri}.

\begin{remark}
The desired estimate for the left hand side of \eqref{eq-10} is,  for $N\gg 1$,
\begin{equation}\label{eq-11}
\left\|\int e^{ix\xi +it|\xi|^2}\rho\hat{f}d\xi \right\|_{L^2_xL^4_t(Q_N)}\le CN^{1/4}\|f\|_2,
\end{equation}
which is sharp by the standard Knapp example, and answers the question raised at the beginning of the paper. However we are not able to establish it in this paper.
\end{remark}
\begin{remark}
By Cauchy-Schwarz's inequality and the Strichartz inequality, we see that
\begin{equation}\label{eq-12}
\left\|\int e^{ix\xi +it|\xi|^2}\rho\hat{f}d\xi \right\|_{L^2_xL^4_t(Q_N)}\le N^{1/2}\|\int e^{ix\xi +it|\xi|^2}\rho\hat{f}d\xi\|_{L^4_{x,t}}\lesssim N^{1/2}\|f\|_2,
\end{equation}
which recovers the estimates due to Sj\"olin \cite{Sjolin:1987:converg-schrod}, and Vega \cite{Vega:1988:converg-schrod}.
\end{remark}
\begin{proof}[Proof of Proposition \ref{prop-local-L2L4}]
By squaring \eqref{eq-10}, it suffices to prove
\begin{equation}\label{eq-13}
 \|e^{it\Delta} f e^{it\Delta}f\|_{L^1_xL^2_t(Q_N)}\lesssim N^{2s_0+} \|f\|_2 \|f\|_2 \text{ for }s_0=3/8.
\end{equation}
In order to utilize the bilinear approach in \cite{Tao:2003:paraboloid-restri}, we then apply the Whitney decomposition to create the ``transversality" condition between $e^{it\Delta} f$ and $e^{it\Delta}f$. For each $j\le 0$, we decompose the set $\{1/2\le |\xi|\le 1\}$ into a union of cubes $\tau^j_k$ which have disjoint interiors and sidelength $2^j$; we split each $\tau_k^j$ further into smaller ``disjoint" cubes of $\tau_{k}^{j-1}$ of side length $2^{j-1}$, and call this $\tau^j_k$ ``parents" of those $\tau^{j-1}_k$. We define $\tau^j_k \simeq \tau^j_{k'} $ if they are disjoint, but have adjacent ``parents". For $j\le 0$, $\{1/2\le |\xi|\le 1\}=\cup_k \tau^j_k$, so there exists a partition of unity  $\sum_{k} \phi^j_k=1$ with $\phi^j_k$ being subject to $\tau^j_k$. Then
\begin{equation}\label{eq-14}
e^{it\Delta}f e^{it\Delta } f=\sum_{j\le 0} \sum_{k} \sum_{k':\, \tau^j_k \simeq \tau^j_{k'}} e^{it\Delta}f^j_k e^{it\Delta}f^j_{k'},
\end{equation} where $\widehat{f_j^k}:=\hat{f} \phi_{\tau^j_k}$. It is easy to observe that, for each given $j$ and $k$, there are only finitely many $k'$ such that $\tau^j_k \simeq \tau^j_{k'}$. So by the triangle inequality, \eqref{eq-13} follows from
\begin{align}
&\label{eq-15} \text{For }2^{2j}N\le 1, \|e^{it\Delta} f^j_k e^{it\Delta}f^j_{k'}\|_{L^1_xL^2_t(Q_N)}\lesssim 2^{j/2}N^{2s_0} \|f^j_k\|_2 \|f^j_{k'}\|_2,\\
&\label{eq-16} \text{For }2^{2j}N\ge 1, \|e^{it\Delta} f^j_k e^{it\Delta}f^j_{k'}\|_{L^1_xL^2_t(Q_N)}\lesssim 2^{j/2+} N^{2s_0+} \|f^j_k\|_2 \|f^j_{k'}\|_2.
\end{align}
We will see that \eqref{eq-15} follows from Proposition \ref{prop-tiny-support}, while \eqref{eq-16} follows from Proposition \ref{prop-bilinear} and \eqref{eq-4b}.
\begin{proposition}\label{prop-tiny-support}For any $R>0$,
\begin{equation}\label{eq-15a}
\|e^{it\Delta}f\|_{L^2_{t,x}(Q_R)} \le CR^{1/2}\|f\|_2;
\end{equation}
If $\hat{f}$ is supported in a unit cube in $\{1/2\le |\xi|\le 1\}$ of sidelength $r$ with $r^2N\le 1$, then
\begin{equation}\label{eq-15b}
\|e^{it\Delta}f\|_{L^2_{x}L^\infty_t(Q_N)} \le CrN^{1/2}\|f\|_2.
\end{equation}
Both estimates are sharp.
\end{proposition}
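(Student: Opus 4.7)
For \eqref{eq-15a}, the plan is a one-line application of Plancherel in $x$. Since $\|e^{it\Delta}f\|_{L^2_x(\R^2)}=\|f\|_2$ for every fixed $t$, I would square and integrate over $t\in[R/2,R]$, an interval of length $\lesssim R$, and then restrict $x$ to $B(0,R)$ to obtain $\|e^{it\Delta}f\|_{L^2_{t,x}(Q_R)}\le R^{1/2}\|f\|_2$. Sharpness is witnessed by a Knapp wave packet: take $\hat f$ proportional to the indicator of a ball of radius $1/R$ around some $\xi_0\in A(1)$, so that $e^{it\Delta}f$ is a Schwartz bump of spatial width $\sim R$ travelling through $B(0,R)$ during a time $\sim R$.

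For \eqref{eq-15b} the first move is to shift the Fourier variable to the center $\xi_0$ of the cube. Setting $\hat g(\eta):=\hat f(\xi_0+\eta)$, so that $\|g\|_2=\|f\|_2$ and $\hat g$ is supported in $|\eta|\lesssim r$, and using $|\xi_0+\eta|^2=|\xi_0|^2+2\xi_0\cdot\eta+|\eta|^2$, one finds
\begin{equation*}
e^{it\Delta}f(x)=e^{i(x\xi_0+t|\xi_0|^2)}\,v(x,t),\qquad v(x,t):=\int e^{ix\eta+it(2\xi_0\cdot\eta+|\eta|^2)}\hat g(\eta)\,d\eta.
\end{equation*}
Thus $|e^{it\Delta}f(x)|=|v(x,t)|$, and $v$ is a function on $\R^2_x\times\R_t$ whose space-time Fourier transform is concentrated on the graph $\{(\eta,2\xi_0\cdot\eta+|\eta|^2):|\eta|\le r\}$. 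The decisive observation is that its projection onto the $\tau$-axis lies in an interval of length $\lesssim r$: $|2\xi_0\cdot\eta|\lesssim r$, and the hypothesis $r^2N\le 1$ is exactly what makes the quadratic correction $|\eta|^2\le r^2$ no larger than $r$ in the range of $t$ we care about.

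The heart of the argument is then a one-dimensional Bernstein inequality in the time variable. For each fixed $x$, multiply $v(x,\cdot)$ by a cutoff $\chi(t/N)$ with $\chi\in C^\infty_c([1/4,2])$ equal to $1$ on $[1/2,1]$; since $1/N\ll r$, the $\tau$-Fourier support of $v(x,\cdot)\chi(\cdot/N)$ remains in an interval of length $\lesssim r$. Bernstein then gives
\begin{equation*}
\sup_{t\in[N/2,N]}|v(x,t)| \le \sup_{t\in\R}|v(x,t)\chi(t/N)| \lesssim r^{1/2}\|v(x,\cdot)\|_{L^2_t([N/4,2N])}.
\end{equation*}
Squaring, integrating over $x\in B(0,N)$, and invoking Plancherel in $x$ (so that $\|v(\cdot,t)\|_{L^2_x(\R^2)}=\|g\|_2=\|f\|_2$ for each $t$), I get
\begin{equation*}
\left\|\sup_{t\in[N/2,N]}|v(\cdot,t)|\right\|_{L^2_x(B(0,N))}^2 \lesssim r\int_{N/4}^{2N}\|v(\cdot,t)\|_{L^2_x(\R^2)}^2\,dt \lesssim rN\|f\|_2^2.
\end{equation*}
Sharpness will be witnessed by the Knapp example $\hat f\propto \chi_{\text{cube of side }r}$: then $|e^{it\Delta}f|$ is essentially a wave packet of amplitude $\sim r$ supported on a spatial tube of width $1/r$ and length $\sim N$, producing an $L^2_xL^\infty_t$-norm of the same order as the right-hand side. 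I do not anticipate a serious obstacle; the only care needed is the bookkeeping around the time cutoff to invoke Bernstein, which is otherwise routine.
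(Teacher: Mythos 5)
Your treatment of \eqref{eq-15a} coincides with the paper's (Plancherel at fixed $t$, then the length $\sim R$ of the time interval), and that part is fine. The problem is \eqref{eq-15b}: what your argument actually proves is $\|e^{it\Delta}f\|_{L^2_xL^\infty_t(Q_N)}\lesssim (rN)^{1/2}\|f\|_2$, since your final display bounds the square of the norm by $rN\|f\|_2^2$, whereas the stated estimate requires the square to be $\lesssim r^2N\|f\|_2^2$. You are short by a factor $r^{1/2}$, and that factor is not cosmetic: it is exactly what produces the gain $2^{j/2}$ in \eqref{eq-15}. The loss is built into your Bernstein step. After you remove the modulation, the temporal frequency of $v(x,\cdot)$ is $2\xi_0\cdot\eta+|\eta|^2$ with $|\eta|\lesssim r$; since the cube sits on the annulus, $|\xi_0|\sim 1$ and the linear term makes this an interval of length $\sim r$, not $\sim r^2$, so Bernstein can only give the constant $r^{1/2}$. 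An interval of length $\sim r^2$ appears only in the Galilean frame $y=x+2t\xi_0$ moving with the packet, and there the maximal function at fixed $x$ is no longer a supremum along a time fiber, so that observation cannot be fed into your scheme. (Two smaller slips: $r^2N\le 1$ gives $N^{-1}\ge r^2$, not $N^{-1}\ll r$, and $|\eta|^2\le r^2\le r$ has nothing to do with $N$.) The paper extracts the factor $r$ from a different mechanism, namely the step $\sup_t|e^{it\Delta}f(x)|\lesssim \|\chi_\Omega\|_2\,\|e^{it\Delta}f(x,\cdot)\|_{L^2_t(\R)}\sim r\,\|e^{it\Delta}f(x,\cdot)\|_{L^2_t(\R)}$, followed by Plancherel in $(x_1,t)$ at fixed $x_2$ and integration over $|x_2|\le N$; your route does not reproduce that step, and you would need to justify an inequality of that strength to reach the stated exponent.

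Your sharpness paragraph should have flagged the mismatch. For the Knapp example with $\|f\|_2=1$ the packet has amplitude $\sim r$ on a tube of width $\sim r^{-1}$, so a portion of that tube of length $L$ inside $B(0,N)$ contributes $\sim r\,(r^{-1}L)^{1/2}=r^{1/2}L^{1/2}$ to the $L^2_xL^\infty_t$ norm; with your claimed length $L\sim N$ this is $r^{1/2}N^{1/2}\|f\|_2$, and even with the more careful $L\sim\min(N,r^{-1})$ forced by $Q_N$ it is $\min(r^{1/2}N^{1/2},1)\|f\|_2$. Either way the example matches the bound you actually proved, not the right-hand side $rN^{1/2}\|f\|_2$, and it exceeds the latter by roughly $(r^2N)^{-1/2}$ when $r^2N\ll 1$. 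So the example does not verify sharpness of \eqref{eq-15b}; rather it shows that the $r^{1/2}$ gap cannot be closed within your approach, and that the constant $rN^{1/2}$ is only safe near the endpoint $r^2N\sim 1$. If you retain your (correct) weaker bound, you should track its effect downstream: it turns \eqref{eq-15} into $\lesssim 2^{j/2}N\,\|f^j_k\|_2\|f^j_{k'}\|_2$, which no longer beats $2^{j/2}N^{3/4}$ termwise but still sums over the scales with $2^{2j}N\le 1$ to $O(N^{3/4})$, so the final theorem survives; as a proof of the Proposition as stated, however, your proposal has a genuine gap.
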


We postpone the proof proposition to Section \ref{sec:tiny-supp}. Assuming this lemma, we see how it implies \eqref{eq-15}: If $2^{2j}N\le 1$,
\begin{equation}
\begin{split}
\text{LHS of } \eqref{eq-15} &\le \|e^{it\Delta} f^j_k\|_{L^2_{x,t}(Q_N)}\|e^{it\Delta}f^j_{k'}\|_{L^2_xL^\infty_t(Q_N)}\lesssim N^{1/2}2^jN^{1/2}\|f^j_k\|_2\|f^j_{k'}\|_2\\
&\le 2^{j/2} N^{3/4} (2^{2j}N)^{1/4} \|f^j_k\|_2\|f^j_{k'}\|_2 \le 2^{j/2} N^{3/4} \|f^j_k\|_2\|f^j_{k'}\|_2.
\end{split}
\end{equation} Thus \eqref{eq-15} follows.

Given $\lambda\ge 1$. Let $S_1:=\{\xi: |\xi_1-(\lambda+1/4)e_1|\le 1/50, |\xi_2|\le 1/50\}$, and $S_2:=\{\xi: |\xi_1-(\lambda e_1-1/4)|\le 1/50, |\xi_2|\le 1/50\}$.
In notion of \cite{Tao:2003:paraboloid-restri}, $S_1$ and $S_2$ are \emph{transverse}, i.e., the unit normals of $S_1$ and $S_2$ are separated by an angle $\gtrsim c>0$.
\begin{proposition}\label{prop-bilinear}
Suppose $\hat{f}$ and $\hat{g}$ are supported on $S_1$ and $S_2$ defined as above. Then for $R\ge 1$,
\begin{equation}\label{eq-17}
\|e^{it\Delta}f e^{it\Delta}g\|_{L^1_xL^2_t(|x|\le \lambda R, |t|\sim R)} \lesssim R^{2s_0+} \|f\|_2 \|g\|_2.
\end{equation}for $s_0=3/8$.
\end{proposition}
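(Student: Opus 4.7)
The plan is to follow Tao's bilinear wave-packet framework \cite{Tao:2003:paraboloid-restri} adapted to the $L^1_x L^2_t$ norm appearing here. Let $A(R)$ denote the best constant in
$$
\|e^{it\Delta}f\, e^{it\Delta}g\|_{L^1_x L^2_t(|x|\le \lambda R,\,|t|\sim R)} \le A(R)\,\|f\|_2\,\|g\|_2,
$$
uniform in $\lambda \ge 1$ and over all transverse pairs of caps as in the statement. I would prove $A(R) \lessapprox R^{3/4}$ by induction on $R$, invoking at smaller scales both the inductive hypothesis and the dilation \eqref{eq-4b} from Theorem \ref{thm-localization}.

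The first step is a wave-packet decomposition at scale $R^{1/2}$: write $f = \sum_T f_T$ and $g = \sum_{T'} g_{T'}$ as superpositions of wave packets associated to tubes of cross-section $R^{1/2}$ and length $R$, oriented along the normals to the $R^{-1/2}$-subcaps that refine $S_1, S_2$. After the standard pigeonholing of amplitudes and of the multiplicities on the spacetime cubes $q$ of side $R^{1/2}$ tiling $\{|x|\le \lambda R\}\times\{|t|\sim R\}$, the resulting bilinear sum splits into a \emph{transverse} part (pairs $(T, T')$ whose intersections with $q$ meet at the angle inherited from the transversality of $S_1, S_2$) and a \emph{tangential} part (pairs whose projections through $q$ are nearly parallel). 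For the transverse part I would invoke the bilinear $L^2_{t,x}$ estimate for transverse caps of the paraboloid on each cube, follow it by Cauchy--Schwarz in $x$ on $q$ (a loss of $|q_x|^{1/2}=R^{1/2}$), and sum over the cubes using orthogonality of the wave packets; this part does not use the induction.

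For the tangential part the surviving packets are confined to a pair of $R^{-1/2}$-subcaps of $S_1, S_2$, and parabolic rescaling by the factor $R^{1/2}$ converts the bilinear inequality on $\{|x|\le \lambda R\}\times\{|t|\sim R\}$ into the same kind of inequality on $\{|x|\le \lambda R^{1/2}\}\times\{|t|\sim R^{1/2}\}$ for caps of unit size. The inductive hypothesis applied at scale $R^{1/2}$ produces a factor of $(R^{1/2})^{3/4}$, and tracking the Jacobians of the rescaling gives exactly the gain needed to close the induction on $R$.

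The main obstacle I expect is the tangential rescaling step, because after zooming in, the spatial ball has radius $\lambda R^{1/2}$ while the time interval is only of length $R^{1/2}$, a spatial--temporal mismatch that a naive argument would pay for with an extra $\lambda^{1/2}$. This mismatch is precisely the one resolved by \eqref{eq-4b}, which upgrades a local maximal estimate on $B(0,N)$ for free to $B(0,\lambda N)$ when $\lambda \gg 1$, and it is the reason \eqref{eq-4b} is singled out in the reduction of \eqref{eq-16}. A secondary technical point is verifying that the $L^2_t$-orthogonality used in the transverse step survives the pigeonholing; this follows from the Fourier support of $F(x,\cdot)$ being confined to an interval of length $\sim 1$, as was already exploited in the reduction \eqref{eq-8}--\eqref{eq-9}.
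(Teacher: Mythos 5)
Your sketch has a genuine gap, and it sits exactly where the real work of the proposition lies. First, the transverse/tangential dichotomy you propose for pairs of wave packets is vacuous in this bilinear setting: $\hat f$ and $\hat g$ are supported on the two separated caps, so \emph{every} pair $(T,T')$ with $T$ from $S_1$ and $T'$ from $S_2$ has velocity separation $\gtrsim 1$; there is no ``nearly parallel'' part left over to rescale. (And if there were, a pair of $R^{-1/2}$-subcaps, one in each of $S_1,S_2$, is separated by $\sim 1$, so parabolic rescaling by $R^{1/2}$ sends it to a pair of unit caps separated by $\sim R^{1/2}$ --- not the transverse unit-separation configuration your inductive hypothesis assumes, so the induction would not apply in the stated form anyway.) Thus your whole argument collapses onto the ``transverse part'', and the treatment you give it --- the classical bilinear $L^2_{t,x}$ estimate on cubes $q$ of side $R^{1/2}$, Cauchy--Schwarz in $x$ at cost $R^{1/2}$, then summation using wave-packet orthogonality --- cannot reach $R^{3/4}$. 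Cauchy--Schwarz in $x$ over the whole region together with the global bilinear $L^2$ estimate already gives only $\lambda R\,\|f\|_2\|g\|_2$, and summing per-cube bounds over the $\sim\lambda^2R^{3/2}$ cubes is lossier still; orthogonality helps in $L^2$ but does not convert a sum of $L^1_xL^2_t(q)$ bounds into the desired global bound with a gain. The missing ingredient is precisely the improved \emph{local} $L^2$ estimate for wave packets not related to a ball/parallelepiped of size $R^{1-\delta}$ --- Tao's Eq.~(23), reproduced here as \eqref{eq-24} --- combined with the relation $\sim$ between tubes and the subparallelepipeds $p$ and the induction on scales $R\to R^{1-\delta}$ of Proposition \ref{prop-induction-hypo}: the related part is estimated by the induction hypothesis via \eqref{eq-23}, and the non-related parts by \eqref{eq-24} plus Cauchy--Schwarz in $x$ at cost $R^{1-\delta}$, which is what produces $R^{-1/4}\cdot R \approx R^{3/4}$. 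Your sketch contains no substitute for this Kakeya-type combinatorial input, and without it the scheme stalls at $O(\lambda R)$.

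Second, the $\lambda$-anisotropy of the region $\{|x|\le\lambda R,\ |t|\sim R\}$ is not resolved by \eqref{eq-4b} inside the proof of this proposition, and it is not clear how it could be: \eqref{eq-4b} is an $L^2_x$ maximal-function statement, while the quantity being inducted on is a bilinear $L^1_xL^2_t$ norm. What the paper actually does is apply a Galilean change of variables (replacing $f,g$ by $e^{i\lambda\xi_1}f,e^{i\lambda\xi_1}g$, which recenters the caps at unit frequency and tilts the region), then partitions the tilted slab into $\sim\lambda^2$ parallelepipeds $P_k$ of size $R\times R$ and uses the stationary-phase localization argument of Lemma \ref{le-lee-local} to replace $f,g$ on each $P_k$ by pieces $f_k,g_k$ with $\sum_k\|f_k\|_2^2\lesssim\|f\|_2^2$, reducing matters to the single-parallelepiped estimate \eqref{eq-20}, which is then proved by the Wolff--Tao wave-packet induction described above. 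If you want to salvage your outline, you would need to (i) add the analogue of this orthogonal decomposition to dispose of $\lambda$, and (ii) replace the transverse/tangential step by the tube--parallelepiped relation $\sim$ together with a proof (or citation) of the localized bilinear estimate \eqref{eq-24}, which is the genuine core of the result.
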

This is the key proposition, which is proven by following the analysis in \cite{Tao:2003:paraboloid-restri}; we postpone its proof to Section \ref{sec:bilinear}. Assuming it, we see how it implies \eqref{eq-16}. If $2^{2j}N\ge 1$,  by scaling,
\begin{equation}\label{eq-18}
\text{LHS of }\eqref{eq-16} =2^j \|e^{it\Delta} g^j_k e^{it\Delta}g^j_{k'}\|_{L^1_xL^2_t(Q_{j,N})}
\end{equation}
where $\widehat{g^j_k}(\xi):=\widehat{f^j_k}(2^j\xi)\phi^j_k(2^j\xi)$, likewise for $g^j_{k'}$; and $Q_{j,N}:=\{(x,t): \, |x|\le 2^j N, 2^{2j}N /2 \le t\le 2^{2j}N\}$. Then $g^j_k$ and $g^j_{k'}$ are supported around $|\xi_0|\sim 2^{-j}$; they are of size 1 and are transverse. Let $\lambda=2^{-j}$ in \eqref{eq-17}, then continuing \eqref{eq-18},
\begin{equation}
2^j\|e^{it\Delta}g^j_k e^{it\Delta}g^j_{k'}\|_{L^1_xL^2_t(|x|\le  2^j N, |t|\sim 2^{2j}N)} \lesssim 2^j (2^{2j}N)^{3/4+} \|g^j_k\|_2\|g^j_{k'}\|_2=2^{j/2+}N^{3/4} \|f^j_k\|_2\|f^j_{k'}\|_2,
\end{equation}
which implies \eqref{eq-16}. Thus the proof of Proposition \ref{prop-local-L2L4} is complete.
\end{proof}

Next we focus on proving Propositions \ref{prop-tiny-support} and \ref{prop-bilinear}, which is the content of the next two subsections.

\subsection{Proof of Proposition \ref{prop-tiny-support}.}\label{sec:tiny-supp}
In this section, we present a proof of Proposition \ref{prop-tiny-support}.
\begin{proof}
Firstly \eqref{eq-15a} follows directly from Plancherel's theorem:
$$\|e^{it\Delta}f\|_{L^2_{t,x}(Q_R)} \le \left(\int_{|t|\le R} \int_{\R^2} |e^{it\Delta}f|^2 dxdt\right)^{1/2}\lesssim R^{1/2}\|f\|_2.$$
This estimate is sharp by the standard Knapp example: Let $\hat{f}:=\chi_{\Omega}$, where $\chi_\Omega$ is a bump function supported by the set $\Omega:=\{(\xi_1,\xi_2):\, |\xi_1-e_1|\le R^{-1/2}, |\xi_2|\le R^{-1/2}\}$.

For \eqref{eq-15b}, let $\supp\hat{f}:=\Omega=\{(\xi_1,\xi_2):\, |\xi_1|\le r, |\xi_2-e_2|\le r\}$ and let $\chi_\Omega$ denote the indicator function of $\supp\hat{f}$. Then
$$\widehat{e^{it\Delta}f}=\widehat{e^{it\Delta}f}\chi_\Omega\Rightarrow e^{it\Delta}f=e^{it\Delta}f \ast_x \widehat{\chi_\Omega}$$ where $\ast_x$ denotes the convolution in $x$. Then
$$\sup|e^{it\Delta}f|\le \|e^{it\Delta}f\|_{L^2_t(\R)} \|\widehat{\chi_\Omega}\|_2=\|e^{it\Delta}f\|_{L^2_t(\R)} \|\chi_\Omega\|_2\lesssim r \|e^{it\Delta}f\|_{L^2_t(\R)}.$$
Then by Plancherel's theorem and Cauchy-Schwarz's inequality,
\begin{equation}
\begin{split}
\text{LHS of }\eqref{eq-15b} &\lesssim r \left(\int_{|x_2|\le N} \int_{x_1,t} |\int e^{ix_1\xi_1+it\tau} \bigl(\int_{\xi_2} e^{ix_2\xi_2}\delta(\tau-|\xi|^2) \hat{f}d\xi_2\bigr) d\xi_1d\tau |^2 dx_1dt\,dx_2\right)^{1/2}\\
&\lesssim r\left(\int_{|x_2|\le N} \int_{x_1,t} |\int |\int_{\xi_2} e^{ix_2\xi_2}\delta(\tau-|\xi|^2) \hat{f}d\xi_2|^2 d\xi_1d\tau \,dx_2\right)^{1/2}\\
&\lesssim rN^{1/2}\|f\|_2\sup_{\tau,|\xi_1|\le r} \bigl(\int_{\xi_2}\delta(\tau-|\xi|^2)d\xi_2\bigr)^{1/2}\\
&\lesssim rN^{1/2}\|f\|_2\sup_{\tau,|\xi_1|\le r} \bigl(\int_{\xi_2}\delta(\tau-\xi_1^2-\xi_2^2)d(\xi_2^2)\bigr)^{1/2}\\
&\lesssim rN^{1/2}\|f\|_2
\end{split}
\end{equation}as $|\xi_2|\gtrsim 1$, which implies \eqref{eq-15b}. This estimate is sharp: Let $\hat{f}:=\chi_{\Omega}$, where $\chi_\Omega$ is a bump function supported by the set $\Omega:=\{(\xi_1,\xi_2):\, |\xi_1|\le r, |\xi_2-e_2|\le r\}$; then if we restrict $(x,t)$ such that
$$|x_1|\le N^{1/2}/100\le r^{-1}/100, \,|x_2+2t|\le N^{1/2}/100\le r^{-1}/100, |t|\le N/100\le r^{-2}/100,$$
note that this restriction is possible since $r^2N\le 1$; then we see that $$\text{LHS of }\eqref{eq-15b} \gtrsim r^2(N^{1/2})^{2/2}=r^2N^{1/2}, \text{while}  \text{ RHS of }\eqref{eq-15b} \lesssim r^2N^{1/2}.$$
This shows that \eqref{eq-15b} is sharp. Thus we completes the proof of Proposition \ref{prop-tiny-support}.
\end{proof}

\subsection{Proof of Proposition \ref{prop-bilinear}.}\label{sec:bilinear}
\begin{proof}To show \eqref{eq-17}, we replace $f$ by $e^{i\lambda\xi_1}f$ and $g$ by $e^{i\lambda\xi_1}g$, then it suffices to prove
\begin{equation}\label{eq-19}
\|e^{it\Delta}f(x-2t\lambda e_1)e^{it\Delta}g(x-2t\lambda e_1)\|_{L^1_xL^2_t(|x|\le \lambda R, |t|\sim R)} \lesssim R^{3/4+} \|f\|_2 \|g\|_2
\end{equation}for $\hat{f}$ and $\hat{g}$ supported on $$\tilde{S}_1:=\{\xi:\, |\xi_1-3e_1/4|\le 1/50,\, |\xi_2| \le 1/50\}, \,\tilde{S}_2:=\{\xi:\, |\xi_1-5e_1/4|\le 1/50,\, |\xi_2| \le 1/50\}.$$ It is clear that $\tilde{S}_1$ and $\tilde{S}_2$ are transverse.

We partition the set $\{(x,t):\,|x|\le \lambda R, |t|\sim R)\}$ into $\sim \lambda^2$ spacetime parallelepipeds $$P_k:=B(x_k,R)\times (R/2,R), \,P_k \text{ makes an angle }\sim \lambda \text{ with } x \text{-plane}. $$
By the analysis to establish Theorem \ref{thm-localization}, for each $(x,t)\in P_k$, there exists $f_k$ and $g_k$ such that $\hat{f}_k, \,\hat{g}_k$ are supported in $\supp \hat{f}+CR^{-1}$ and $\supp \hat{g}+CR^{-1}$, respectively, and
\begin{equation}
\begin{split}
&|e^{it\Delta}f(x)| \le |e^{it\Delta}f_k(x)|+C_lR^{-l} \|f\|_2,\\
&|e^{it\Delta}g(x)| \le |e^{it\Delta}g_k(x)|+C_lR^{-l} \|g\|_2,\\
&\left(\sum_k \|f_k\|^2_2\right)^{1/2}\le C_\eps R^\eps \|f\|_2,\\
&\left(\sum_k \|g_k\|^2_2\right)^{1/2}\le C_\eps R^\eps \|g\|_2
\end{split}
\end{equation} for any $\eps>0$ and $l\in \N$, the set of natural numbers. Let $P$ denote any such parallelepiped. Then it suffices to prove, for the same $f$ and $g$ as above,
\begin{equation}\label{eq-20}
\|e^{it\Delta}fe^{it\Delta}g\|_{L^1_xL^2_t(P)} \lesssim R^{3/4+} \|f\|_2 \|g\|_2.
\end{equation}
We will follow the method of the wave packet decompositions and induction-on-scale in \cite{Tao:2003:paraboloid-restri} and \cite{Wolff:2001:restric-cone} to establish \eqref{eq-20}. We first observe that the above estimate is true for some large $\alpha>0$, i.e.,
\begin{equation}\label{eq-21}
\|e^{it\Delta}fe^{it\Delta}g\|_{L^1_xL^2_t(P)} \lesssim R^{\alpha} \|f\|_2 \|g\|_2.
\end{equation}
Then the claim \eqref{eq-20} will follow from the following inductive statement.
\begin{proposition}\label{prop-induction-hypo}
Suppose $\alpha>0$ is such that \eqref{eq-21} holds. Then
\begin{equation}\label{eq-22}
\|e^{it\Delta}fe^{it\Delta}g\|_{L^1_xL^2_t(P)} \le C_\eps R^{C\eps} \max\{ R^{(1-\delta)\alpha},\, R^{3/4+C\delta}\} \|f\|_2 \|g\|_2
\end{equation}for all $1\gg \eps, \delta>0$, where the constants $C>0$ independent of $\eps$ and $\delta$.
\end{proposition}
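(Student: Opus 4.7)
The plan is to apply the wave packet decomposition and induction-on-scales scheme of Wolff \cite{Wolff:2001:restric-cone} and Tao \cite{Tao:2003:paraboloid-restri}, adapted to the mixed norm $L^1_x L^2_t$. First decompose $f = \sum_T f_T$ and $g = \sum_{T'} g_{T'}$ into wave packets associated with tubes $T, T'$ of length $R$ and cross-section $R^{1/2}$, with $T$ (resp.\ $T'$) oriented along the Schr\"odinger characteristic direction attached to a subcap of $\tilde S_1$ (resp.\ $\tilde S_2$). The transversality of $\tilde S_1$ and $\tilde S_2$ guarantees that each tube from the first family and each tube from the second cross in essentially a single $R^{1/2}$-sized cell.

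Next, introduce the induction scale: partition $P$ into smaller parallelepipeds $\{q\}$ of sidelength $R^{1-\delta}$, and for each $q$ collect the wave packets of $f$ (resp.\ $g$) whose tubes pass within $R^{1/2+C\eps}$ of $q$ into $f_q$ (resp.\ $g_q$). Rapid decay of the wave packets outside their tubes reduces the problem to estimating
\begin{equation*}
\sum_q \|e^{it\Delta} f_q \cdot e^{it\Delta} g_q\|_{L^1_x L^2_t(q)},
\end{equation*}
while the almost-orthogonality estimate $\sum_q \|f_q\|_2^2 \le R^{C\eps}\|f\|_2^2$ (and its analogue for $g$), in the spirit of Lemma \ref{le-lee-local}, serves as the bookkeeping tool.

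For each $q$ we then have two competing estimates. Rescaling $q$ to a unit parallelepiped and invoking the inductive hypothesis \eqref{eq-21} at the scale $R^{1-\delta}$ gives $\|e^{it\Delta} f_q \cdot e^{it\Delta} g_q\|_{L^1_x L^2_t(q)} \le C_\eps R^{C\eps} (R^{1-\delta})^\alpha \|f_q\|_2 \|g_q\|_2$. Alternatively, combining the transversality of $\tilde S_1, \tilde S_2$ with a bilinear $L^2$-orthogonality argument plus a wave packet count on $q$ yields a direct bound $\lesssim R^{3/4+C\delta}\|f_q\|_2\|g_q\|_2$. Pigeonholing over $q$ to use whichever bound is smaller at each $q$, then summing via Cauchy--Schwarz and the almost-orthogonality estimate above, produces the claimed maximum.

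The main obstacle is obtaining the $R^{3/4+C\delta}$ ``base-case'' bound: naively applying Tao's sharp bilinear $L^2$ Strichartz estimate together with H\"older on a cube of size $R^{1-\delta}$ gives only $R^{1-\delta}\|f_q\|_2 \|g_q\|_2$, which is too weak. To recover $R^{3/4+C\delta}$ one must carefully count, for each $q$, the wave packets of $f$ and $g$ that concentrate there, and then trade this count against the $L^2$-orthogonality produced by transverse frequency support, following the combinatorial argument of \cite{Tao:2003:paraboloid-restri}. Balancing the losses from pigeonholing and almost-orthogonality so that they all fit under the $R^{C\eps}$ factor is the most delicate part of the bookkeeping.
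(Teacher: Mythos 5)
There is a genuine gap in the way you organize the induction, and it sits exactly at the point where Wolff's and Tao's scheme does its real work. You assign to each cell $q$ of size $R^{1-\delta}$ \emph{all} wave packets whose tubes pass near $q$, and you then claim the almost-orthogonality bound $\sum_q\|f_q\|_2^2\le R^{C\eps}\|f\|_2^2$. This is false: a tube has temporal length $\sim R$ while each cell has temporal length $\sim R^{1-\delta}$, so a single tube meets $\sim R^{\delta}$ cells and the best one can say is $\sum_q\|f_q\|_2^2\lesssim R^{\delta}\|f\|_2^2$. Feeding that loss into your inductive term turns $R^{(1-\delta)\alpha}$ into $R^{(1-\delta)\alpha+\delta}$, and the resulting recursion $\alpha\mapsto\max\{(1-\delta)\alpha+\delta,\;3/4+C\delta\}$ has its fixed point at $\alpha=1$: the iteration stalls at exponent $1$ and never reaches $3/4$. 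Your other branch, ``use whichever bound is smaller at each $q$,'' does not repair this, because the direct bound $R^{3/4+C\delta}\|f_q\|_2\|g_q\|_2$ for \emph{all} packets concentrated in a cell is essentially the statement being proved (at a slightly smaller scale); if it were available for arbitrary cells it would render the induction unnecessary. You acknowledge this difficulty at the end, but the proposed fix (``count the packets and trade against $L^2$-orthogonality'') misplaces where the dichotomy must occur: it is not a per-cell dichotomy but a per-(tube, cell) one.

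The paper's proof (following Tao) resolves exactly this point with the relation $T\sim p$ between tubes and cells: after the standard pigeonholing reducing \eqref{eq-22} to \eqref{eq-25}, each cell's bilinear sum is split into the part with $T_1\sim p$ and $T_2\sim p$, and the three parts where at least one tube is unrelated to $p$. The related part is handled by the induction hypothesis \eqref{eq-21} at scale $R^{1-\delta}$, and the property \eqref{eq-23}, $\#\{p: T\sim p\}\lessapprox 1$, is precisely what replaces your false orthogonality claim: each tube is charged to $\lessapprox 1$ cells, so summing over cells costs only $R^{C\eps}$. The unrelated parts are \emph{not} estimated by induction but by the key $L^2(p)$ estimate \eqref{eq-24} (Tao's Eq.\ (23), transported to parallelepipeds), which gives $R^{-1/4+C\delta}(\#\T_1)^{1/2}(\#\T_2)^{1/2}$ in $L^2_{t,x}(p)$; Cauchy--Schwarz in $x$ over the cell of spatial radius $R^{1-\delta}$ then yields the $R^{3/4+C\delta}$ term. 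To make your argument work you would need to introduce this relation (or an equivalent device guaranteeing an $O(1)$ tube-to-cell incidence count for the inductive part) and prove the analogue of \eqref{eq-24} for the non-related packets; as written, the two bounds you interpolate between cannot both be justified on the same decomposition.
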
By choosing suitable $\eps, \delta$ and performing iteration if necessary, \eqref{eq-20} and hence \eqref{eq-19} follows. Thus Proposition \ref{prop-induction-hypo} is all we need to establish.

We recall two crucial lemmas from \cite{Tao:2003:paraboloid-restri}: one on the wave packet decomposition of frequency-localized Schr\"odinger waves (\cite[Lemma 4.1]{Tao:2003:paraboloid-restri}), and the other on the \emph{relation} $\sim$ between tubes and balls (\cite[Eq. (19)]{Tao:2003:paraboloid-restri}), an $L^2$-localized spacetime estimate of wave packets (\cite[Eq. (23)]{Tao:2003:paraboloid-restri}). A typical $\tilde{S}_j$-tube is a set in the form of
$$T:=\{(t,x): R/2\le t\le R, \, |x-(x(T)+tv(T))|\le R^{1/2} \}$$
for an initial position $x(T)\in R^{1/2}\Z^2$ and initial velocity $v(T)\in R^{-1/2}\Z^2 \cap \tilde{S}_j$.  Let $\T_1,\,\T_2$ denote the collection of $\tilde{S}_1$-tubes and $\tilde{S}_2$-tubes respectively such that all the tubes intersect with $P$.
\begin{lemma}\label{le-wave-packet} Let $j=1,2$ and $f_j$ be a smooth functions on $\tilde{S}_j$. Then there exists a decomposition
$$e^{it\Delta}f_j=\sum_{T_j}c_{T_j}\phi_{T_j},$$
where $T_j$ ranges over all $\tilde{S}_j$-tubes, and the complex-valued coefficients $c_{T_j}$ satisfies $$\left(\sum_{T_j}|c_{T_j}|^2\right)^{1/2}\lesssim \|f_j\|_2,$$
and for each $T_j$, the wave packets $\phi_{T_j}$ are free Schr\"odinger waves, where for each $R/2\le t\le R$, the function $\phi_{T_j}(t)$ has Fourier transform supported on the set
$$ \xi\in \R^2: \xi=v(T_j)+O(R^{-1/2})$$
and obeys the pointwise estimates
$$\phi_{T_j}(t,x)\le C_lR^{-1/2} \left(1+\frac {|x-(x(T_j+tv(T_j)))|}{R^{1/2}}\right)^{-l}$$
for all $x\in R^2$ and any $l\in \N$, and
$$\|\sum_{T_j}\phi_{T_j}(t)\|_{L^2_x}\lesssim (\#\{T_j\})^{1/2}.$$
\end{lemma}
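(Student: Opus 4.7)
The plan is to construct the wave packet decomposition by a standard double partition of unity, one in frequency at scale $R^{-1/2}$ (matched to the Fourier-support condition on each $\phi_T$) and one in physical space at scale $R^{1/2}$ (matched to the tube cross-section). I would fix nonnegative smooth compactly supported functions $\psi,\eta$ on $\R^2$ with $\sum_{k\in\Z^2}\psi(\cdot - k)\equiv 1$ and $\sum_{k\in\Z^2}\eta(\cdot - k)\equiv 1$, and set $\psi_v(\xi):=\psi(R^{1/2}(\xi - v))$ for $v\in R^{-1/2}\Z^2\cap\tilde S_j$ and $\eta_{x_0}(y):=\eta((y - x_0)/R^{1/2})$ for $x_0\in R^{1/2}\Z^2$. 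With $(f_j)_v := \mathcal{F}^{-1}(\psi_v\hat{f}_j)$, $g_{v,x_0}(y):=\eta_{x_0}(y)(f_j)_v(y)$, $c_T := \|g_{v(T),x(T)}\|_{L^2}$, and $\phi_T(t,x) := c_T^{-1} e^{it\Delta} g_{v(T),x(T)}(x)$ (with $\phi_T\equiv 0$ when $c_T = 0$), the identity $e^{it\Delta}f_j = \sum_T c_T\phi_T$ follows directly from the two partitions of unity.

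The $\ell^2$ coefficient bound is then immediate from Plancherel, the bounded overlap of the $\{\psi_v\}$, and $\sum_{x_0}|\eta_{x_0}|^2\lesssim 1$:
$$\sum_T |c_T|^2 = \sum_{v,x_0}\int|\eta_{x_0}(y)|^2|(f_j)_v(y)|^2\,dy \lesssim \sum_v\|(f_j)_v\|_2^2 \lesssim \|f_j\|_2^2.$$
The Fourier-support statement on $\phi_T(t)$ is also immediate, because $\widehat{g_{v,x_0}} = \widehat{\eta_{x_0}}\ast(\psi_v\hat{f}_j)$ is supported in $v + O(R^{-1/2})$ and $e^{it\Delta}$ preserves Fourier supports in $t$.

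The main technical step is the Schwartz-type pointwise decay for $\phi_T$. Here I would modulate out the leading phase: the substitution $\zeta = \xi - v$ yields
$$e^{it\Delta}g_{v,x_0}(x) = e^{ixv + it|v|^2}\int e^{i(x+2tv)\zeta + it|\zeta|^2}\widehat{g_{v,x_0}}(\zeta + v)\,d\zeta,$$
and then I would rescale $\zeta = R^{-1/2}\tilde\zeta$, $x + 2tv = R^{1/2}\tilde x$. In the rescaled variables the amplitude is supported in $|\tilde\zeta|\lesssim 1$ and the phase becomes $\tilde x\tilde\zeta + (t/R)|\tilde\zeta|^2$ with $|t/R|\le 1$, so the integrand is smooth with bounded derivatives of all orders uniformly in $t$. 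The resulting oscillatory integral in $\tilde x$ is therefore Schwartz uniformly in $t$, which rescales back to Schwartz decay in $x + 2tv$ at scale $R^{1/2}$ with $L^\infty$ size $\lesssim R^{-1}$. Convolving this kernel with $g_{v,x_0}$ (which is supported in $B(x_0,CR^{1/2})$ with $L^2$-mass $c_T$) and applying Cauchy--Schwarz in the physical variable of integration produces $|\phi_T(t,x)|\lesssim R^{-1/2}(1+|x-(x(T)+tv(T))|/R^{1/2})^{-l}$; any factor of $2$ or sign in the drift $tv(T)$ is absorbed into the convention identifying $v(T)$ with the group velocity of a Schr\"odinger wave of frequency $v(T)$.

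Finally, the orthogonality bound $\|\sum_T\phi_T(t)\|_{L^2_x}\lesssim(\#\{T\})^{1/2}$ splits along the two indices: tubes with distinct $v(T)$ are nearly $L^2$-orthogonal by Plancherel, since their Fourier supports lie in the finitely overlapping caps $v + O(R^{-1/2})$; tubes with the same $v(T)$ are, at any fixed $t$, concentrated on finitely overlapping $R^{1/2}$-balls centered on the $R^{1/2}$-lattice translates of $tv(T)$, so they also add nearly orthogonally. The one step that genuinely requires care is the Schwartz decay above: since $t|\zeta|^2$ could rotate by order one on the $R^{-1/2}$-support when $|t|\sim R$, the rescaling trick (rather than naively absorbing $e^{it|\zeta|^2}$ into the amplitude with $R$-dependent Schwartz seminorms) is what keeps the $l$-th order decay constants uniform in $t$.
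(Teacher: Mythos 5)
You should first know that the paper does not prove this lemma at all: it is quoted directly from \cite[Lemma 4.1]{Tao:2003:paraboloid-restri}, so the only benchmark is the standard construction, and your proposal reproduces its architecture faithfully (frequency caps of radius $R^{-1/2}$, spatial cells of side $R^{1/2}$, $L^2$-normalized pieces as packets, rescaled stationary phase for the pointwise decay, and two-step almost-orthogonality for the fixed-time $L^2$ bound). The $\ell^2$ bound on the coefficients and the rescaling argument for the Schwartz decay are correct as written, and your remark about absorbing the factor $-2$ in the group velocity into the convention for $v(T)$ is legitimate bookkeeping: with the paper's normalization $e^{it\Delta}f=\int e^{ix\xi+it|\xi|^2}\hat f\,d\xi$ a packet with frequencies near $v$ travels along $x_0-2tv$, and the lemma's statement simply inherits Tao's normalization.

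There is, however, one genuine flaw in the details. You take the spatial cutoff $\eta$ to be smooth and \emph{compactly supported in physical space}; then $\widehat{\eta_{x_0}}$ is Schwartz but not compactly supported, so $\widehat{g_{v,x_0}}=\widehat{\eta_{x_0}}\ast(\psi_v\hat f_j)$ is \emph{not} supported in $v+O(R^{-1/2})$ --- it only decays rapidly off that cap. This breaks the exact Fourier-support property asserted in the lemma, and it also undercuts your own orthogonality step, where you invoke Plancherel and ``Fourier supports in finitely overlapping caps''; as written that step would have to be replaced by an almost-orthogonality argument with frequency tails, and the lemma's statement would not be literally verified. The standard repair --- the one in Tao's construction and, in a different context, in this paper's own proof of Lemma \ref{le-lee-local} --- is to put the compact support on the Fourier side: choose $\eta$ Schwartz with $\hat\eta$ supported in $B(0,1/2)$ and $\hat\eta(0)=1$, so that Poisson summation still gives $\sum_{k\in\Z^2}\eta(\cdot-k)\equiv 1$. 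Then each packet has Fourier support exactly in $v(T)+O(R^{-1/2})$, while $g_{v,x_0}$ loses compact spatial support but retains rapid decay at scale $R^{1/2}$ around $x_0$, which is all that your convolution/Cauchy--Schwarz argument for the pointwise bound and the fixed-time spatial almost-orthogonality actually require. With that single substitution (and the routine weighted Cauchy--Schwarz it entails), your proof goes through.
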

For any $\delta>0$, let $\mathfrak{p}:=\{p_k\}$ be a partition of $P$ consisting of smaller parallelepipeds of size $B(x_k, R^{1-\delta})\times (R^{1-\delta}/2, R^{1-\delta})$, $x_k\in \R^2$; it is clear that $\# k\le R^{C\delta}$. We will let $p$ denote any such $p_k$ and define $A \lessapprox B: A\le C_\eps R^\eps B$ for all $\eps>0$.
\begin{lemma}
Let the relation $\sim$ be defined as in \cite[Section 8]{Tao:2003:paraboloid-restri}. Then for all $T\in \T_1\cup \T_2$,
\begin{equation}\label{eq-23}
\#\{p\in \mathfrak{p}: T\sim p\}\lessapprox 1
\end{equation}
and
\begin{equation}\label{eq-24}
\begin{split}
\left\|\sum_{T_1\in \T_1: \,T_1\nsim p}\phi_{T_1} \sum_{T_2\in \T_2}\phi_{T_2}\right\|_{L^2(p)}&\lessapprox R^{C\delta} R^{-1/4} (\#\T_1)^{1/2}(\# \T_2)^{1/2},\\
\left\|\sum_{T_1\in \T_1}\phi_{T_1} \sum_{T_2\in \T_2:: \,T_2\nsim p}\phi_{T_2}\right\|_{L^2(p)}&\lessapprox R^{C\delta} R^{-1/4} (\#\T_1)^{1/2}(\# \T_2)^{1/2}.
\end{split}
\end{equation}
\end{lemma}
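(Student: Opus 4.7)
The first bound \eqref{eq-23} follows by unpacking the definition of $\sim$ from \cite[Section 8]{Tao:2003:paraboloid-restri}: the relation $T\sim p$ imposes a tight alignment between the tube's center trajectory $t\mapsto x(T)+tv(T)$ and the parallelepiped $p$, so for each fixed $T$ only $\lessapprox 1$ distinct $p\in\mathfrak p$ can satisfy it. I would verify this by a direct geometric count, with no new ingredient beyond what is already in \cite{Tao:2003:paraboloid-restri}.

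For the bilinear bound \eqref{eq-24}, the plan is $L^2$-orthogonality on the Fourier side driven by the transversality of $\tilde S_1,\tilde S_2$. I would first dominate the squared $L^2(p)$ norm by a weighted $L^2(\R^3)$ norm against a smooth cutoff $\psi_p$ adapted to $p$, then expand
\[
\int_{\R^3}\Bigl|\sum_{T_1\nsim p,\,T_2}\phi_{T_1}\phi_{T_2}\Bigr|^2\psi_p\,dx\,dt=\sum_{T_1,T_1'\nsim p}\sum_{T_2,T_2'}\int\phi_{T_1}\overline{\phi_{T_1'}}\phi_{T_2}\overline{\phi_{T_2'}}\,\psi_p\,dx\,dt.
\]
Each $\phi_{T_j}$ has spacetime Fourier support in an $O(R^{-1/2})$-neighborhood of $(v(T_j),|v(T_j)|^2)$ on the paraboloid, so $\phi_{T_1}\phi_{T_2}$ has Fourier support near $(v(T_1)+v(T_2),|v(T_1)|^2+|v(T_2)|^2)$. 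Transversality of $\tilde S_1,\tilde S_2$ (i.e.\ $|v(T_1)-v(T_2)|\gtrsim 1$) implies that the map $\Phi(v_1,v_2):=(v_1+v_2,|v_1|^2+|v_2|^2)$ is bi-Lipschitz on the relevant $R^{-1/2}$-lattice, so for each $(T_1,T_2)$ only $O(1)$ choices of $(T_1',T_2')$ yield overlapping Fourier supports. This reduces the square to $\lessapprox R^{C\delta}\sum_{T_1\nsim p,\,T_2}\|\phi_{T_1}\phi_{T_2}\|_{L^2(\R^3)}^2$, with the $R^{C\delta}$ absorbing the Fourier fattening from $\psi_p$ together with the mild losses in the discrete orthogonality.

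The final ingredient is an individual $L^2$ bound on each product. For a transverse pair $(T_1,T_2)$, the two $R^{1/2}$-thickened trajectories in $\R^3=\R^2_x\times\R_t$ cross at angle $\sim 1$: their intersection has spatial cross-section of area $\sim R$ (two disks of area $R$ overlapping) but only persists for a time window of length $\sim R^{1/2}$, giving spacetime intersection volume $\sim R^{3/2}$. Since each $\phi_{T_j}$ has amplitude $\sim R^{-1/2}$ on its tube, $\|\phi_{T_1}\phi_{T_2}\|_{L^2(\R^3)}^2\lesssim R^{-2}\cdot R^{3/2}=R^{-1/2}$. Summing over at most $\#\T_1\cdot\#\T_2$ pairs and taking a square root produces the claimed $R^{C\delta}R^{-1/4}(\#\T_1)^{1/2}(\#\T_2)^{1/2}$; the symmetric bound with the roles of $\T_1,\T_2$ swapped is identical.

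The main obstacle will be verifying that transversality gives the required uniform lower bound on the Jacobian of $\Phi$ over all relevant lattice pairs, and tracking carefully that the $L^2(p)$ localization together with the wave packet discretization contribute at most the stated $R^{C\delta}$ loss. A secondary point worth noting is that the constraint $T_1\nsim p$ does not itself improve this $L^2$ estimate --- its role is to make the bound usable in the induction-on-scales scheme of \cite{Tao:2003:paraboloid-restri}, where the complementary sum over $T_1\sim p$ is handled separately via \eqref{eq-23}.
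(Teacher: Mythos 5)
Your first paragraph (the count \eqref{eq-23}) is fine and matches what the paper says, but the heart of your argument for \eqref{eq-24} has a genuine gap: the almost-orthogonality step fails. In two spatial dimensions the map $\Phi(v_1,v_2)=(v_1+v_2,|v_1|^2+|v_2|^2)$ goes from $\R^4$ to $\R^3$; transversality makes it a submersion, not an injection. Its fibers are circles (knowing $v_1+v_2$ and $|v_1|^2+|v_2|^2$ fixes only $|v_1-v_2|$, not its direction), so at lattice spacing $R^{-1/2}$ a given $O(R^{-1/2})$-neighborhood has $\sim R^{1/2}$ preimage velocity pairs, not $O(1)$. Worse, the spacetime Fourier support of $\phi_{T_1}\phi_{T_2}$ depends only on $(v(T_1),v(T_2))$ and not on the initial positions $x(T_i)$, so Fourier-side separation cannot distinguish the many tubes sharing a velocity. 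Consequently your reduction to $\sum_{T_1,T_2}\|\phi_{T_1}\phi_{T_2}\|_{L^2}^2\lesssim R^{-1/2}\,\#\T_1\,\#\T_2$ is false, not merely unproven: take the focusing (``bush'') configuration in which $\T_1,\T_2$ consist of the $\sim R$ tubes (one per velocity) passing through a common spacetime point, with unit coefficients. Near the focus the product has size $\sim R$ on a ball of radius $\sim 1$, so the square of the left-hand side is $\gtrsim R^2$, while your proposed bound is $R^{-1/2}\cdot R\cdot R=R^{3/2}$.

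The same example shows that the statement you are actually proving --- \eqref{eq-24} with the condition $T_1\nsim p$ dropped --- is false, since $R\gg R^{3/4+C\delta}$. So your closing remark that the constraint $T_1\nsim p$ ``does not itself improve this $L^2$ estimate'' is exactly backwards: the relation $\sim$ of \cite[Section 8]{Tao:2003:paraboloid-restri} is constructed precisely so that focusing configurations are carried by the related tubes, and any correct proof of \eqref{eq-24} must use the $\nsim$ hypothesis together with the localization to $p$. In \cite{Tao:2003:paraboloid-restri} the corresponding bound, Eq.~(23), is the technical core of the whole induction-on-scales argument: it is proved by decomposing $p$ further into $R^{1/2}$-balls, applying local $L^2$ (C\'ordoba-type square function) estimates on each, and then running a combinatorial incidence count between tubes and small balls that exploits the definition of $\sim$ --- not an abstract orthogonality computation. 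The paper itself does not reprove this: it observes that because \eqref{eq-24} is an $L^2$ bound in both $x$ and $t$, one may exchange the order of integration and change variables (a shear taking the parallelepiped $p$ to a ball of radius $R^{1-\delta}$, under which wave packets map to wave packets), reducing \eqref{eq-24} to Tao's Eq.~(23), which is then quoted. Your individual-pair estimate $\|\phi_{T_1}\phi_{T_2}\|_{L^2}^2\lesssim R^{-1/2}$ is correct, but it is only a small ingredient; to make your route work you would have to reproduce Tao's combinatorial argument or else invoke his Eq.~(23) after the change of variables, as the paper does.
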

\begin{remark} The relation $\sim$ in \cite{Tao:2003:paraboloid-restri} is defined for tubes $T$ and balls $B$ of size $R^{1-\delta}$, and is for purpose of establishing \cite[Eq. (23)]{Tao:2003:paraboloid-restri}; one can see that \eqref{eq-23} is a byproduct of the definition of $\sim$ in \cite[Section 8]{Tao:2003:paraboloid-restri}. In our case, we need a relation between tubes and parallelepipeds of size $R^{1-\delta}$, which is determined by \eqref{eq-24}. We observe that \eqref{eq-24} is an $L^2$-estimate in both space and time; so we can exchange the integration order and make a change of variables, which reduces to the ``balls" case as in \cite[Eq. (23)]{Tao:2003:paraboloid-restri}. Thus the analysis in \cite{Tao:2003:paraboloid-restri} applies equally well.
\end{remark}
\begin{proof}[Proof of Proposition \ref{prop-induction-hypo}.]
We normalize $f,\,g$ such that $\|f\|_2=1$ and $\|g\|_2=1$. We apply Lemma \ref{le-wave-packet} to both $f$ and $g$, writing $f=\sum_{T_1} c_{T_1}\phi_{T_1}$ and $f=\sum_{T_2} c_{T_2}\phi_{T_2}$ for $T_i$ ranging over $\tilde{S}_i$-tubes, $i=1,2$. By the same pigeonholing as in deriving \cite[Eq. (15)]{Tao:2003:paraboloid-restri}, it suffices to prove
\begin{equation}\label{eq-25}
\left\|\sum_{T_1\in \T_1}\phi_{T_1} \sum_{T_2\in \T_2}\phi_{T_2}\right\|_{L^1_xL^2_t(P)}\lessapprox \bigl(R^{(1-\delta)\alpha}+ R^{3/4+C\delta}\bigr) (\#\T_1)^{1/2}(\# \T_2)^{1/2}
\end{equation} for all collections $\T_i$ of $\tilde{S}_i$-tubes such that all the tubes intersect $P$.

To prove \eqref{eq-25}, we partition $P$ into $O(R^{C\delta})$ finitely many overlapping spacetime parallelepipeds of size $R^{1-\delta}$ of with the same orientation: $P=\cup_k p_k$, where $p_k:=B(x_k, R^{1-\delta})\times (R^{1-\delta}/2, R^{1-\delta})$ for some $x_k\in \R^2$; recall that $\mathfrak{p}:=\{p_k\}$. We estimate the left hand side of \eqref{eq-25} by the triangle inequality,
$$ \sum_{p\in \mathfrak{p}} \left\|\sum_{T_1\in \T_1}\phi_{T_1} \sum_{T_2\in \T_2}\phi_{T_2}\right\|_{L^1_xL^2_t(p)}.$$
We split the above summation into four parts, $$\sum_{T_1\in \T_1: \,T_1\sim p}\sum_{T_2\in \T_2: \,T_2\sim p}, \text{ and } \sum_{T_1\nsim p}\sum_{T_2\sim p}, \text{ and } \sum_{T_1\sim p}\sum_{T_2\nsim p}, \text{ and } \sum_{T_1\nsim p}\sum_{T_2\nsim p};$$
By the triangle inequality again, we label corresponding sums by $I, II, III, IV$, for instance,
$$I:= \sum_{p\in \mathfrak{p}}\left\|\sum_{T_1\in \T_1: \,T_1\sim p}\phi_{T_1}\sum_{T_2\in \T_2: \,T_2\sim p}\phi_{T_2}\right\|_{L^1_xL^2_t(p)}.$$
By using \eqref{eq-23} and the induction hypothesis \eqref{eq-21}, and Cauchy-Schwarz's inequality,
\begin{equation}\label{eq-26}
I\lessapprox R^{(1-\delta)\alpha} (\#\T_1)^{1/2}(\# \T_2)^{1/2}.
\end{equation}
Thus we see that \eqref{eq-25} and hence Proposition \ref{prop-induction-hypo} follow from the following estimate
\begin{equation}\label{eq-27}
II+III+IV\lessapprox R^{3/4+C\delta} (\#\T_1)^{1/2}(\# \T_2)^{1/2}.
\end{equation}
Since $\#\{p_k\}\le R^{C\delta}$, \eqref{eq-27} is implies by
\begin{equation}\label{eq-28}
\left\|\sum_{T_1\nsim p}\phi_{T_1}\sum_{T_2}\phi_{T_2}\right\|_{L^1_xL^2_t(p)} \lessapprox R^{3/4+C\delta} (\#\T_1)^{1/2}(\# \T_2)^{1/2}
\end{equation} and
\begin{equation}\label{eq-29}
\left\|\sum_{T_1}\phi_{T_1}\sum_{T_2}\phi_{T_2\nsim p}\right\|_{L^1_xL^2_t(p)} \lessapprox R^{3/4+C\delta} (\#\T_1)^{1/2}(\# \T_2)^{1/2}
\end{equation}
for any $p\in \mathfrak{p}$. However, they are implied by \eqref{eq-24} combined with Cauchy-Schwarz's inequality in the spatial variable. Thus we finish the proof of \eqref{eq-25} and hence Proposition \ref{prop-induction-hypo}, which completes the proof of Proposition \ref{prop-bilinear}.
\end{proof}
\end{proof}


\end{document}